\newtheorem{theorem}{Theorem}[section]
\newtheorem{corollary}[theorem]{Corollary}
\theoremstyle{definition}
\theoremstyle{remark}
\newtheorem{remark}[theorem]{Remark}
\numberwithin{equation}{section}
\newcommand{\be}{\begin{equation}}
\newcommand{\ee}{\end{equation}}
\newcommand{\cM}{{\mathcal M}}
\newcommand{\NN}{\mathbb{N}}
\begin{document}
\setcounter{page}{1}

\title[Inequalities on the Hadamard weighted geometric mean]{Inequalities on the spectral radius, operator norm and numerical radius of Hadamard weighted geometric mean of positive kernel operators}

\author[A. Peperko]{Aljo\v{s}a Peperko$^{1, 2}$} 

\address{$^{1}$  Faculty of Mechanical Engineering, University of Ljubljana, A\v{s}ker\v{c}eva 6, SI-1000 Ljubljana, Slovenia;
\newline
$^{2}$ Institute of Mathematics, Physics, and Mechanics,
Jadranska 19, SI-1000 Ljubljana, Slovenia.}
\email{\textcolor[rgb]{0.00,0.00,0.84}{aljosa.peperko@fmf.uni-lj.si; aljosa.peperko@fs.uni-lj.si}}


\subjclass[2010]{15A60, 47B34, 15A42, 47A10, 47B65.}

\keywords{Hadamard-Schur weighted geometric mean; Spectral radius; Operator norm; Numerical radius;  Positive kernel operators; Banach function spaces;  Non-negative matrices; Banach sequence spaces.}


\begin{abstract}
Recently, several authors have proved inequalities on the spectral radius $\rho$, operator norm $\|\cdot\|$ and numerical radius
 of Hadamard products and ordinary products  of non-negative matrices that define operators on sequence spaces, or of Hadamard geometric mean and ordinary products of positive kernel operators on Banach function spaces. 
In the present article we generalize and refine several of these results. 
In particular, we show that for a Hadamard geometric mean $A ^{\left( \frac{1}{2} \right)} \circ B  ^{\left( \frac{1}{2} \right)}$  of positive kernel operators $A$ and $B$ on a Banach function space $L$, we have
$$ \rho \left(A^{(\frac{1}{2})} \circ B^{(\frac{1}{2})} \right) \le  \rho \left((AB)^{(\frac{1}{2})} \circ (BA)^{(\frac{1}{2})}\right)^{\frac{1}{2}} \le \rho (AB)^{\frac{1}{2}}.$$
In the special case $L=L^2(X, \mu)$  we also prove that
$$\|A^{(\frac{1}{2})} \circ  B^{(\frac{1}{2})} \| \le \rho  \left ( (A^* B ) ^{(\frac{1}{2})}\circ (B ^* A)^{(\frac{1}{2})}\right)^{\frac{1}{2}} \le \rho  (A^* B )^{\frac{1}{2}}.$$
\end{abstract} \maketitle

\section{Introduction}

In \cite{Zh09}, X. Zhan conjectured that, for non-negative $n\times n$ matrices $A$ and $B$, the spectral radius $\rho (A\circ B)$ of the Hadamard product satisfies
$$\rho (A\circ B) \le \rho (AB),$$
where $AB$ denotes the usual matrix product of $A$ and $B$. This conjecture was confirmed by K.M.R. Audenaert in \cite{Au10} by proving 
\be
\rho (A\circ B) \le \rho ^{\frac{1}{2}}((A\circ A)(B\circ B))\le \rho (AB).
\label{Aud}
\ee
These inequalities were established via a trace description of the spectral radius. Using the fact that the Hadamard product is a principal submatrix of 
the Kronecker product, R.A. Horn and F. Zhang  proved in \cite{HZ10} the inequalities
\be
\rho (A\circ B) \le \rho ^{\frac{1}{2}}(AB\circ BA)\le \rho (AB).
\label{HZ}
\ee
Applying 
a technique of \cite{HZ10}), Z. Huang proved that 
\be
\rho (A_1 \circ A_2 \circ \cdots \circ A_m) \le \rho (A_1 A_2 \cdots A_m)
\label{Hu}
\ee
for $n\times n$ non-negative matrices $A_1, A_2, \cdots, A_m$ (see \cite{Hu11}). The author of the current paper extended the inequality (\ref{Hu}) to non-negative matrices that define bounded 
operators on Banach sequence spaces in \cite{P12}. Additional refinements of this inequality were proved by D. Chen and Y. Zhang in \cite{CZ15} and by R. Drnov\v{s}ek and the author in \cite{DP16}, where they also obtained related inequalities for the operator norm and numerical radius. 
In the proofs of \cite{P12} and \cite{DP16} certain results on the Hadamard product from \cite{DP05} and \cite{P06} were used.

Earlier, A.R. Schep was the first one to observe that the results of \cite{DP05} and \cite{P06} are applicable in this context (see \cite{S11} and \cite{Sc11}). In particular, in \cite[Theorem 2.8]{S11}  he proved that the inequality
\be
\rho \left(A ^{\left( \frac{1}{2} \right)} \circ B  ^{\left( \frac{1}{2} \right)} \right) \le \rho (AB) ^{\frac{1}{2}}
\label{Schep}
\ee
holds for positive kernel operators on $L^p$ spaces. Here $A ^{\left( \frac{1}{2} \right)} \circ B  ^{\left( \frac{1}{2}\right)} $ denotes the Hadamard geometric mean of operators $A$ and $B$. This inequality was generalized in \cite[Theorem 3.1]{DP16}, where it was shown that the inequality 
\be 
\rho \left(A_1^{\left(\frac{1}{m}\right)} \circ A_2^{\left(\frac{1}{m}\right)} \circ \cdots \circ 
A_m^{\left(\frac{1}{m}\right)}\right)   \le \rho (A_1 A_2 \cdots A_m)^{\frac{1}{m}} 
\label{genHuBfs}
\ee
holds for positive kernel operators $A_1, \ldots, A_m$ on an arbitrary Banach function space. In \cite{P16a}, the author generalized the inequality (\ref{genHuBfs}) to the setting of the joint and generalized spectral radius.   

In \cite{Hu11} and \cite{CZ15} the following  upper bounds for the operator norm 
\be
\|A\circ B\| \le  \rho  ((A^T B )\circ (B ^T A))^{\frac{1}{2}} \le \rho (A^TB) 
\label{ATB}
\ee
were proved for $n\times n$ non-negative matrices and these inequalities were extended to      
 non-negative matrices that define operators on $l^2$ in \cite{DP16}. 

The article is organized as follows. In the second section we introduce some definitions and facts, and we recall some results from \cite{DP05} and \cite{P06}, 
which we will need in our proofs. In Section 3 (Theorems \ref{genBfsp} and \ref{ref_Prad} and Corollary \ref{m-ji}) we generalize and refine   
inequality (\ref{genHuBfs}) for positive kernel operators on Banach function spaces and prove related inequalities for the operator norm and  numerical radius. In particular, we refine inequality (\ref{Schep}) in the following way (Corollary \ref{AB-ji}):
$$ \rho \left(A^{(\frac{1}{2})} \circ B^{(\frac{1}{2})} \right) \le  \rho \left((AB)^{(\frac{1}{2})} \circ (BA)^{(\frac{1}{2})}\right)^{\frac{1}{2}} \le \rho (AB)^{\frac{1}{2}},$$
which can be seen as  a kernel  version of (\ref{HZ}).
In Theorem \ref{matrixalpha} we prove more general inequalities than (\ref{Hu}), which are valid for non-negative matrices that define operators on Banach sequence spaces. In Section 4 we prove additional results for positive kernel operators on $L^2(X, \mu)$.  
In particular, in Theorem \ref{ok2} we prove that the inequalities
$$\|A^{(\frac{1}{2})} \circ  B^{(\frac{1}{2})} \| \le \rho  \left ( (A^* B ) ^{(\frac{1}{2})}\circ (B ^* A)^{(\frac{1}{2})}\right)^{\frac{1}{2}} \le \rho  (A^* B )^{\frac{1}{2}}$$
hold for such operators. In Theorem \ref{kernel_alot} and Remark \ref{bolj_natancno} we generalize this result to several operators and we obtain an additional closely related result for non-negative matrices that define operators on $l^2$ in Theorem \ref{matrix_alot}. 
 

\section{Preliminaries}
\vspace{1mm}

Let $\mu$ be a $\sigma$-finite positive measure on a $\sigma$-algebra $\cM$ of subsets of a non-void set $X$.
Let $M(X,\mu)$ be the vector space of all equivalence classes of (almost everywhere equal)
complex measurable functions on $X$. A Banach space $L \subseteq M(X,\mu)$ is
called a {\it Banach function space} if $f \in L$, $g \in M(X,\mu)$,
and $|g| \le |f|$ imply that $g \in L$ and $\|g\| \le \|f\|$. Throughout the paper we will assume that  $X$ is the carrier of $L$, that is, there is no subset $Y$ of $X$ of 
 strictly positive measure with the property that $f = 0$ a.e. on $Y$ for all $f \in L$ (see \cite{Za83}).

Standard examples of Banach function spaces are Banach sequence spaces (explicitly defined bellow), 
the
well-known spaces $L^p (X,\mu)$ ($1\le p \le \infty$) and other less known examples such as Orlicz, Lorentz,  Marcinkiewicz  and more general  rearrangement-invariant spaces (see e.g. \cite{BS88}, \cite{CR07} and the references cited there), which are important, e.g., in interpolation theory.
 Recall that the cartesian product $L=E\times F$ 
of Banach function spaces is again a Banach function space, equipped with the norm
$\|(f, g)\|_L=\max \{\|f\|_E, \|g\|_F\}$.

By an {\it operator} on a Banach function space $L$ we always mean a linear
operator on $L$.  An operator $A$ on $L$ is said to be {\it positive} 
if it maps nonnegative functions to nonnegative ones, i.e., $AL_+ \subset L_+$, where $L_+$ denotes the positive cone $L_+ =\{f\in L : f\ge 0 \; \mathrm{a.e.}\}$.
Given operators $A$ and $B$ on $L$, we write $A \le B$ if the operator $B - A$ is positive.

Recall that a positive  operator $A$ on $L$ is always bounded, i.e., its operator norm
\be
\|A\|=\sup\{\|Ax\|_L : x\in L, \|x\|_L \le 1\}=\sup\{\|Ax\|_L : x\in L_+, \|x\|_L \le 1\}
\label{equiv_op}
\ee
is finite.  
Also, its spectral radius $\rho (A)$ is always contained in the spectrum.

In the special case $L= L^2(X, \mu)$ we can define the {\it numerical radius} $w(A)$ of 
a bounded operator $A$ on $L^2(X, \mu)$ by 
$$ w(A) = \sup \{ | \langle A f, f \rangle | : f \in L^2(X, \mu), \| f \|_2 = 1 \} . $$
If, in addition, $A$ is positive, then it is straightforward to prove that 
$$ w(A) = \sup \{ \langle A f, f \rangle  : f \in L^2(X, \mu)_+ , \| f \|_2 = 1 \} . $$
From this it follows easily that $w(A) \le w(B)$ for all positive operators $A$ and $B$ on $L^2(X, \mu)$ with $A \le B$.

An operator $A$ on a Banach function space $L$ is called a {\it kernel operator} if
there exists a $\mu \times \mu$-measurable function
$a(x,y)$ on $X \times X$ such that, for all $f \in L$ and for almost all $x \in X$,
$$ \int_X |a(x,y) f(y)| \, d\mu(y) < \infty \ \ \ {\rm and} \ \ 
   (Af)(x) = \int_X a(x,y) f(y) \, d\mu(y)  .$$
One can check that a kernel operator $A$ is positive iff 
its kernel $a$ is non-negative almost everywhere. 
It is well-known that kernel operators play a very important, often even central, role in a variety of applications from differential and integro-differential equations, problems from physics 
(in particular from thermodynamics), engineering, statistical and economic models, etc.  (see e.g. \cite{J82}, \cite{BP03},  \cite{DLR13},  \cite{LL05} 
and the references cited there).
For the theory of Banach function spaces and more general Banach lattices we refer the reader to the books \cite{Za83}, \cite{BS88}, \cite{AA02}, \cite{AB85}. 

Let $A$ and $B$ be positive kernel operators on $L$ with kernels $a$ and $b$ respectively,
and $\alpha \ge 0$.
The \textit{Hadamard (or Schur) product} $A \circ B$ of $A$ and $B$ is the kernel operator
with kernel equal to $a(x,y)b(x,y)$ at a point $(x,y) \in X \times X$ which can be defined (in general) 
only on some order ideal of $L$. Similarly, the \textit{Hadamard (or Schur) power} 
$A^{(\alpha)}$ of $A$ is the kernel operator with kernel equal to $(a(x, y))^{\alpha}$ 
at point $(x,y) \in X \times X$ which can be defined only on some order ideal of $L$. Here we use the convention $0^0=1$. 

Let $A_1 ,\ldots, A_n$ be positive kernel operators on a Banach function space $L$, 
and $\alpha _1, \ldots, \alpha _n$ positive numbers such that $\sum_{j=1}^n \alpha _j = 1$.
Then the {\it  Hadamard weighted geometric mean} 
$A = A_1 ^{( \alpha _1)} \circ A_2 ^{(\alpha _2)} \circ \cdots \circ A_n ^{(\alpha _n)}$ of 
the operators $A_1 ,\ldots, A_n$ is a positive kernel operator defined 
on the whole space $L$, since $A \le \alpha _1 A_1 + \alpha _2 A_2 + \ldots + \alpha _n A_n$ by the inequality between the weighted arithmetic and geometric means. Let us recall  the following result which was proved in \cite[Theorem 2.2]{DP05} and 
\cite[Theorem 5.1]{P06}. 

\begin{theorem} 
Let $\{A_{i j}\}_{i=1, j=1}^{k, m}$ be positive kernel operators on a Banach function space $L$.
If $\alpha _1$, $\alpha _2$,..., $\alpha _m$ are positive numbers  
such that $\sum_{j=1}^m \alpha _j = 1$, then the positive kernel operator
$$A:= \left(A_{1 1}^{(\alpha _1)} \circ \cdots \circ A_{1 m}^{(\alpha _m)}\right) \ldots \left(A_{k 1}^{(\alpha _1)} \circ \cdots \circ A_{k m}^{(\alpha _m)} \right)$$
satisfies the following inequalities
\begin{eqnarray}
\label{basic2}
A &\le &  
(A_{1 1} \cdots  A_{k 1})^{(\alpha _1)} \circ \cdots 
\circ (A_{1 m} \cdots A_{k m})^{(\alpha _m)} , \\
\label{norm2}
\left\|A \right\| &\le &  
\|A_{1 1} \cdots  A_{k 1}\|^{\alpha _1} \cdots \|A_{1 m} \cdots A_{k m}\|^{\alpha _m}, \\ 
\label{spectral2}
\rho \left(A \right) &\le  &
\rho \left( A_{1 1} \cdots  A_{k 1} \right)^{\alpha _1} \cdots 
\rho \left( A_{1 m} \cdots A_{k m}\right)^{\alpha _m} .
\end{eqnarray}

If,  in addition,  $L= L^2(X, \mu)$, then  

\be 
w (A) \le  w \! \left( A_{1 1} \cdots  A_{k 1} \right)^{\alpha _1} \cdots 
w \! \left( A_{1 m} \cdots A_{k m}\right)^{\alpha _m}.
\label{glnum}
\ee
\label{DPBfs}
\end{theorem}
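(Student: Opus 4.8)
The plan is to prove the operator (kernel) inequality (\ref{basic2}) first and then to read off (\ref{norm2}), (\ref{spectral2}) and (\ref{glnum}) from it, using only the weighted arithmetic--geometric mean inequality for positive operators recalled above, the monotonicity of $\|\cdot\|$ and $w$ on positive operators, the subadditivity and positive homogeneity of $\|\cdot\|$ and $w$, and Gelfand's formula $\rho(T)=\lim_{n}\|T^{n}\|^{1/n}$. For (\ref{basic2}), write $a_{ij}$ for the kernel of $A_{ij}$; the $i$-th factor $A_{i1}^{(\alpha_1)}\circ\cdots\circ A_{im}^{(\alpha_m)}$ has kernel $\prod_{j=1}^{m}a_{ij}(s,t)^{\alpha_j}$ at $(s,t)$, so composing the $k$ factors and invoking Tonelli's theorem (the integrands are non-negative) shows that the kernel of $A$ equals, for $\mu\times\mu$-almost every $(x,y)$,
$$a(x,y)=\int_{X^{k-1}}\prod_{i=1}^{k}\prod_{j=1}^{m}a_{ij}(x_{i-1},x_i)^{\alpha_j}\,d\mu(x_1)\cdots d\mu(x_{k-1}),\qquad x_0:=x,\ \ x_k:=y.$$
Fixing such $x,y$ and putting $\varphi_j(x_1,\dots,x_{k-1}):=\prod_{i=1}^{k}a_{ij}(x_{i-1},x_i)\ge 0$ on $X^{k-1}$, the relation $\sum_j\alpha_j=1$ lets us apply the generalized H\"older inequality on the product space $(X^{k-1},\mu^{k-1})$ with exponents $1/\alpha_1,\dots,1/\alpha_m$, giving
$$a(x,y)=\int_{X^{k-1}}\prod_{j=1}^{m}\varphi_j^{\alpha_j}\,d\mu^{k-1}\ \le\ \prod_{j=1}^{m}\Bigl(\int_{X^{k-1}}\varphi_j\,d\mu^{k-1}\Bigr)^{\alpha_j}.$$
Since $\int_{X^{k-1}}\varphi_j\,d\mu^{k-1}$ is exactly the kernel of $A_{1j}A_{2j}\cdots A_{kj}$ at $(x,y)$, the right-hand side is the kernel of $(A_{11}\cdots A_{k1})^{(\alpha_1)}\circ\cdots\circ(A_{1m}\cdots A_{km})^{(\alpha_m)}$, and (\ref{basic2}) follows.

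Next I would prove the special case $k=1$ of (\ref{norm2}) and (\ref{glnum}): for positive kernel operators $B_1,\dots,B_m$ with the same weights and $C:=B_1^{(\alpha_1)}\circ\cdots\circ B_m^{(\alpha_m)}$, one has $\|C\|\le\prod_{j}\|B_j\|^{\alpha_j}$ and, if $L=L^{2}(X,\mu)$, $w(C)\le\prod_{j}w(B_j)^{\alpha_j}$. Indeed, for any $s_j>0$ with $\prod_{j}s_j^{\alpha_j}=1$ the scalars cancel in the kernel, so $C=(s_1B_1)^{(\alpha_1)}\circ\cdots\circ(s_mB_m)^{(\alpha_m)}$, whence $C\le\sum_{j=1}^{m}\alpha_j s_j B_j$ by the weighted arithmetic--geometric mean inequality; the triangle inequality then gives $\|C\|\le\sum_{j}\alpha_j s_j\|B_j\|$, and minimizing over the admissible tuples $(s_j)$ (substitute $u_j=s_j\|B_j\|$ and apply the arithmetic--geometric mean inequality once more) yields $\|C\|\le\prod_{j}\|B_j\|^{\alpha_j}$. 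The same computation with $w$ replacing $\|\cdot\|$ works on $L^{2}(X,\mu)$ because $w$ is subadditive and positively homogeneous, and the degenerate cases where some $B_j=0$ are trivial.

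Finally I would assemble the statement. Applying (\ref{basic2}) with $B_j:=A_{1j}A_{2j}\cdots A_{kj}$ gives $A\le C:=B_1^{(\alpha_1)}\circ\cdots\circ B_m^{(\alpha_m)}$, so by monotonicity of $\|\cdot\|$ (and, on $L^{2}$, of $w$) on positive operators together with the previous paragraph we obtain (\ref{norm2}) and (\ref{glnum}). For (\ref{spectral2}) I would note that $A^{n}$ is a product of $nk$ Hadamard weighted geometric means carrying the weights $\alpha_1,\dots,\alpha_m$, so (\ref{basic2}) applied with $nk$ factors yields
$$A^{n}\ \le\ \bigl((A_{11}\cdots A_{k1})^{n}\bigr)^{(\alpha_1)}\circ\cdots\circ\bigl((A_{1m}\cdots A_{km})^{n}\bigr)^{(\alpha_m)};$$
the case $k=1$ of the norm bound then gives $\|A^{n}\|\le\prod_{j}\|(A_{1j}\cdots A_{kj})^{n}\|^{\alpha_j}$, and taking $n$-th roots and letting $n\to\infty$, Gelfand's formula produces $\rho(A)\le\prod_{j}\rho(A_{1j}\cdots A_{kj})^{\alpha_j}$, which is (\ref{spectral2}).

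The step I expect to require the most care is the kernel computation: one must check joint measurability on $X^{k-1}$ of the iterated-kernel integrand, justify Tonelli's theorem and the identification of the iterated integral with the kernel of the corresponding operator product, and apply the generalized H\"older inequality on the product measure space. None of this is deep, but it is where the argument has to be set up with precision; once (\ref{basic2}) is available, the remaining deductions are routine.
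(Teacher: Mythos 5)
Your proof is correct and follows essentially the same route as the source the paper cites for this theorem (\cite[Theorem 2.2]{DP05} and \cite[Theorem 5.1]{P06}; the paper itself only quotes the result): the kernel of the composed product is bounded via the generalized H\"older inequality to get (\ref{basic2}), the single-mean norm and numerical-radius bounds come from the scaled arithmetic--geometric mean comparison, and (\ref{spectral2}) follows by applying the norm bound to $A^{n}$ and using Gelfand's formula. No gaps; the measurability/Tonelli points you flag are exactly the routine care the original argument takes.
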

The following result is a special case  of Theorem \ref{DPBfs}. 
\begin{theorem} 
\label{special_case}
Let $A_1 ,\ldots, A_m$ be positive kernel operators on a Banach function space  $L$,
and $\alpha _1, \ldots, \alpha _m$ positive numbers such that $\sum_{j=1}^m \alpha _j = 1$.
Then we have
\be
 \|A_1 ^{( \alpha _1)} \circ A_2 ^{(\alpha _2)} \circ \cdots \circ A_m ^{(\alpha _m)} \| \le
  \|A_1\|^{ \alpha _1}  \|A_2\|^{\alpha _2} \cdots \|A_m\|^{\alpha _m}  
\label{gl1nrm}
\ee
and
\be
 \rho(A_1 ^{( \alpha _1)} \circ A_2 ^{(\alpha _2)} \circ \cdots \circ A_m ^{(\alpha _m)} ) \le
\rho(A_1)^{ \alpha _1} \, \rho(A_2)^{\alpha _2} \cdots \rho(A_m)^{\alpha _m} .
\label{gl1vecr}
\ee
If, in addition,  $L= L^2(X, \mu)$,  then
\be  
w(A_1 ^{( \alpha _1)} \circ A_2 ^{(\alpha _2)} \circ \cdots \circ A_m ^{(\alpha _m)} ) \le
w(A_1)^{ \alpha _1} \, w(A_2)^{\alpha _2} \cdots w(A_m)^{\alpha _m} .
\label{gl1num} 
\ee
\end{theorem}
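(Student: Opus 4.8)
The plan is to deduce Theorem~\ref{special_case} directly from Theorem~\ref{DPBfs} by specializing to a single factor. Concretely, I would apply Theorem~\ref{DPBfs} with $k=1$, setting $A_{1j}:=A_j$ for $j=1,\ldots,m$ and keeping the same weights $\alpha_1,\ldots,\alpha_m$. Then each ``column product'' $A_{1j}\cdots A_{kj}$ collapses to the single operator $A_j$, so the operator $A$ appearing in Theorem~\ref{DPBfs} is exactly
$$A=A_1^{(\alpha_1)}\circ A_2^{(\alpha_2)}\circ\cdots\circ A_m^{(\alpha_m)},$$
which is a positive kernel operator defined on all of $L$ by the weighted arithmetic--geometric mean inequality recalled above. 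With this substitution, inequality~(\ref{norm2}) becomes precisely (\ref{gl1nrm}), inequality~(\ref{spectral2}) becomes precisely (\ref{gl1vecr}), and, when $L=L^2(X,\mu)$, inequality~(\ref{glnum}) becomes precisely (\ref{gl1num}); inequality~(\ref{basic2}) degenerates to the trivial identity $A\le A$. Since Theorem~\ref{DPBfs} is itself quoted from \cite[Theorem~2.2]{DP05} and \cite[Theorem~5.1]{P06}, nothing further is required, and there is no real obstacle: the content of Theorem~\ref{special_case} is genuinely a corollary. The only thing to get right is the matching of indices.

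For completeness I would also indicate the self-contained argument for the norm bound (\ref{gl1nrm}), which is the most elementary of the three. Using (\ref{equiv_op}) it suffices to bound $\|Af\|_L$ for $f\in L_+$ with $\|f\|_L\le 1$. Writing $a_1,\ldots,a_m$ for the kernels of $A_1,\ldots,A_m$ and using $\sum_{j=1}^m\alpha_j=1$ to split $f=\prod_{j=1}^m f^{\alpha_j}$, Hölder's inequality with exponents $1/\alpha_1,\ldots,1/\alpha_m$ inside the defining integral gives the pointwise estimate
$$(Af)(x)=\int_X\prod_{j=1}^m\bigl(a_j(x,y)f(y)\bigr)^{\alpha_j}\,d\mu(y)\le\prod_{j=1}^m\Bigl(\int_X a_j(x,y)f(y)\,d\mu(y)\Bigr)^{\alpha_j}=\prod_{j=1}^m\bigl((A_jf)(x)\bigr)^{\alpha_j}.$$
Then the arithmetic--geometric mean inequality $\prod_j g_j^{\alpha_j}\le\sum_j\alpha_j g_j$ applied to $g_j=(A_jf)/\|A_jf\|_L$ (the cases where some $\|A_jf\|_L=0$ being trivial), together with the order property of $\|\cdot\|_L$ and $\sum_j\alpha_j=1$, yields $\|Af\|_L\le\prod_{j=1}^m\|A_jf\|_L^{\alpha_j}\le\prod_{j=1}^m\|A_j\|^{\alpha_j}$, and taking the supremum over admissible $f$ gives (\ref{gl1nrm}).

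The spectral-radius and numerical-radius bounds (\ref{gl1vecr}) and (\ref{gl1num}) do not follow from such a one-line pointwise manipulation: this is exactly the point at which the product structure exploited in \cite{DP05} and \cite{P06} is needed (for $\rho$ one passes through $\rho(A)=\lim_n\|A^n\|^{1/n}$ and must control Hadamard powers of $n$-fold products, and for $w$ one uses the variational description of the numerical radius over the positive cone). Accordingly, the honest and economical route for the full statement is the one in the first paragraph, namely to invoke Theorem~\ref{DPBfs} with $k=1$; I expect no genuine difficulty beyond bookkeeping.
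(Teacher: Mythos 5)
Your proposal is correct and matches the paper exactly: the paper gives no separate proof, simply noting that Theorem~\ref{special_case} is the special case $k=1$ of Theorem~\ref{DPBfs}, which is precisely your first paragraph (and your index bookkeeping is right). The supplementary H\"older/AM--GM argument for (\ref{gl1nrm}) is also sound, though not needed.
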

The following refinement of Theorem \ref{DPBfs}, which will be applied in Sections 3 and 4,  follows directly from (\ref{basic2}) and Theorem \ref{special_case}.
\begin{theorem} 
\label{refinement}
Let $L$, $\{A_{i j}\}_{i=1, j=1}^{k, m}$, $\alpha _1$, $\alpha _2$,..., $\alpha _m$ and $A$ be as in Theorem \ref{DPBfs}.
Then we have
\begin{eqnarray}
\label{norm2_ref}
\left\|A \right\| &\le &  
\nonumber
\|(A_{1 1} \cdots  A_{k 1})^{(\alpha _1)} \circ \cdots 
\circ (A_{1 m} \cdots A_{k m})^{(\alpha _m)}\| \\
&\le &  
\|A_{1 1} \cdots  A_{k 1}\|^{\alpha _1} \cdots \|A_{1 m} \cdots A_{k m}\|^{\alpha _m}, \\ 
\label{spectral2_ref}
\rho \left(A \right)   &\le &  
\nonumber
\rho \left( (A_{1 1} \cdots  A_{k 1})^{(\alpha _1)} \circ \cdots 
\circ (A_{1 m} \cdots A_{k m})^{(\alpha _m)} \right) \\
&\le  &
\rho \left( A_{1 1} \cdots  A_{k 1} \right)^{\alpha _1} \cdots 
\rho \left( A_{1 m} \cdots A_{k m}\right)^{\alpha _m} .
\end{eqnarray}

If,  in addition,  $L= L^2(X, \mu)$, then  

\begin{eqnarray}
w (A)  &\le &  
\nonumber
w \left( (A_{1 1} \cdots  A_{k 1})^{(\alpha _1)} \circ \cdots 
\circ (A_{1 m} \cdots A_{k m})^{(\alpha _m)} \right) \\
&\le&  w \! \left( A_{1 1} \cdots  A_{k 1} \right)^{\alpha _1} \cdots 
w \! \left( A_{1 m} \cdots A_{k m}\right)^{\alpha _m}.
\label{glnum_ref}
\end{eqnarray}

\label{DPrefined}
\end{theorem}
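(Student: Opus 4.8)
The plan is to deduce all three chains of inequalities from a single order relation --- inequality (\ref{basic2}) of Theorem \ref{DPBfs} --- together with the monotonicity of the operator norm, the spectral radius and the numerical radius on the positive cone, and then to read off the right-hand bounds from Theorem \ref{special_case}.

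First I would abbreviate $B_j := A_{1 j} A_{2 j} \cdots A_{k j}$ for $j = 1, \ldots, m$. Each $B_j$ is again a positive kernel operator on $L$, so, since $\sum_{j=1}^{m} \alpha_j = 1$, the Hadamard weighted geometric mean $C := B_1^{(\alpha_1)} \circ B_2^{(\alpha_2)} \circ \cdots \circ B_m^{(\alpha_m)}$ is a well-defined positive kernel operator on all of $L$. With this notation, inequality (\ref{basic2}) reads exactly $A \le C$, while the right-hand sides of (\ref{norm2_ref}), (\ref{spectral2_ref}) and (\ref{glnum_ref}) are the products $\|B_1\|^{\alpha_1} \cdots \|B_m\|^{\alpha_m}$, $\rho(B_1)^{\alpha_1} \cdots \rho(B_m)^{\alpha_m}$ and $w(B_1)^{\alpha_1} \cdots w(B_m)^{\alpha_m}$.

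The middle (hence also the leftmost) inequality of each chain then follows from monotonicity. For the norm, $0 \le A \le C$ gives $0 \le Ax \le Cx$ for every $x \in L_+$, hence $\|Ax\|_L \le \|Cx\|_L$ by the defining property of a Banach function space, and (\ref{equiv_op}) yields $\|A\| \le \|C\|$. For the spectral radius, $A \le C$ implies $\rho(A) \le \rho(C)$ by the classical monotonicity of the spectral radius for positive operators on a Banach lattice (see e.g. \cite{AB85}). When $L = L^2(X,\mu)$, the inequality $w(A) \le w(C)$ for $0 \le A \le C$ was already recorded in Section 2. This establishes the first inequality in each of the three displayed chains.

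It remains only to bound $\|C\|$, $\rho(C)$ and $w(C)$ from above, and for this I would simply apply Theorem \ref{special_case} to the operators $B_1, \ldots, B_m$ with the weights $\alpha_1, \ldots, \alpha_m$: inequalities (\ref{gl1nrm}), (\ref{gl1vecr}) and (\ref{gl1num}) give precisely $\|C\| \le \|B_1\|^{\alpha_1} \cdots \|B_m\|^{\alpha_m}$, $\rho(C) \le \rho(B_1)^{\alpha_1} \cdots \rho(B_m)^{\alpha_m}$ and, in the $L^2$ case, $w(C) \le w(B_1)^{\alpha_1} \cdots w(B_m)^{\alpha_m}$; substituting back $B_j = A_{1 j} \cdots A_{k j}$ finishes all three chains. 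I do not expect any genuine obstacle: the only non-formal ingredient is the monotonicity of $\rho$ on the positive cone, which is standard, and everything else is bookkeeping. The real content of the statement is that it isolates the intermediate operator $C$ --- the quantity that will actually be estimated in the results of Sections 3 and 4.
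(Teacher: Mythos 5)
Your proposal is correct and follows exactly the paper's intended argument: the author derives Theorem \ref{DPrefined} "directly from (\ref{basic2}) and Theorem \ref{special_case}", i.e.\ by using $A \le (A_{11}\cdots A_{k1})^{(\alpha_1)}\circ\cdots\circ(A_{1m}\cdots A_{km})^{(\alpha_m)}$ together with monotonicity of $\|\cdot\|$, $\rho$ and $w$ on the positive cone, and then applying (\ref{gl1nrm}), (\ref{gl1vecr}) and (\ref{gl1num}) to the products $A_{1j}\cdots A_{kj}$. Your write-up merely makes explicit the monotonicity steps the paper leaves implicit; no further comment is needed.
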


The above results on the spectral radius and operator norm  remain valid under less restrictive assumption $\sum_{j=1}^m \alpha _j \ge 1$ in the case of  (finite or infinite) non-negative matrices that define operators on Banach sequence spaces. To make this precise we fix some notations.

Let $R$ denote either the set $\{1, \ldots, n\}$ for some $n \in \NN$ or the set $\NN$ of all natural numbers.
Let $S(R)$ be the vector lattice of all complex sequences $(x_i)_{i\in R}$.
A Banach space $L \subseteq S(R)$ is called a {\it Banach sequence space} if $x \in S(R)$, $y \in L$
and $|x| \le |y|$ imply that $x \in L$ and $\|x\|_L \le \|y\|_L$. 

Let us denote by $\mathcal{L}$ the collection of all Banach sequence spaces
$L$ satisfying the property that $e_i = \chi_{\{i\}} \in L$ and
$\|e_i\|_L=1$ for all $i \in R$. Observe that a Banach sequence space is a Banach function space over a measure space $(R, \mu)$, 
where $\mu$ denotes the counting measure on $R$ (and for $L\in \mathcal{L}$ the set $R$ is the carrier of $L$).
 Standard examples of spaces from $\mathcal{L}$ are Euclidean spaces, the 
well-known spaces $l^p (R)$ ($1\le p \le \infty$) and the space $c_0$ 
of all null convergent sequences, equipped with the usual norms. The set $\mathcal{L}$ also contains all cartesian products $L=E\times F$ for 
$E, F\in \mathcal{L}$. 

A matrix $A=[a_{ij}]_{i,j\in R}$ is called {\it non-negative} if $a_{ij}\ge 0$ for all $i, j \in R$. 

The following result follows from \cite[Theorem 3.3]{DP05} and \cite[Theorem 5.1 and Remark 5.2]{P06}.

\begin{theorem} Given $L \in \mathcal{L}$, let $\{A_{i j}\}_{i=1, j=1}^{k, m}$ be non-negative matrices that define 
operators on $L$.
If $\alpha _1$, $\alpha _2$,..., $\alpha _m$ are positive numbers 
such that $\sum_{j=1}^m \alpha _j \ge 1$, then the matrix 
$A:= \left(A_{1 1}^{(\alpha _1)} \circ \cdots \circ A_{1 m}^{(\alpha _m)}\right) \cdots 
\left(A_{k 1}^{(\alpha _1)} \circ \cdots \circ A_{k m}^{(\alpha _m)}\right)$
also defines an operator on $L$ 
and it satisfies the inequalities  (\ref{basic2}), (\ref{norm2_ref}) and (\ref{spectral2_ref}). 
\label{DP}
\end{theorem}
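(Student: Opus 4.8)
The plan is to bootstrap from the already-settled case $\sum_{j=1}^{m}\alpha_j = 1$ (Theorems~\ref{DPBfs}, \ref{special_case} and \ref{refinement}) by absorbing the ``excess'' $s:=\sum_{j=1}^{m}\alpha_j\ge 1$ into Hadamard powers of the individual matrices. The engine of the argument is an elementary fact about Hadamard powers: \emph{if $C=[c_{pq}]$ is a non-negative matrix that defines an operator on some $L\in\mathcal{L}$ and $\beta\ge 1$, then $C^{(\beta)}$ defines an operator on $L$ with $\|C^{(\beta)}\|\le\|C\|^{\beta}$ and $\rho(C^{(\beta)})\le\rho(C)^{\beta}$.} To see this, first note that $0\le c_{pq}e_p\le Ce_q$ together with $\|e_p\|_L=\|e_q\|_L=1$ gives $c_{pq}\le\|C\|$ for all $p,q$; hence $C^{(\beta)}\le\|C\|^{\beta-1}C$ entrywise, which yields boundedness of $C^{(\beta)}$ and $\|C^{(\beta)}\|\le\|C\|^{\beta-1}\|C\|=\|C\|^{\beta}$. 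For the spectral radius the crude bound $\rho(C^{(\beta)})\le\|C\|^{\beta-1}\rho(C)$ is too weak, so instead I would iterate: since $\sum_k t_k^{\beta}\le\big(\sum_k t_k\big)^{\beta}$ for non-negative $t_k$ when $\beta\ge 1$, comparing the $(p,q)$-entries $\sum c_{pi_1}^{\beta}c_{i_1i_2}^{\beta}\cdots c_{i_{n-1}q}^{\beta}$ of $(C^{(\beta)})^{n}$ with $\big(\sum c_{pi_1}c_{i_1i_2}\cdots c_{i_{n-1}q}\big)^{\beta}$ of $(C^{n})^{(\beta)}$ shows $(C^{(\beta)})^{n}\le(C^{n})^{(\beta)}$ entrywise for every $n$; combining this with the norm bound just proved (applied to $C^{n}$) and Gelfand's formula gives
$$\rho\big(C^{(\beta)}\big)=\lim_{n\to\infty}\big\|(C^{(\beta)})^{n}\big\|^{1/n}\le\lim_{n\to\infty}\|C^{n}\|^{\beta/n}=\rho(C)^{\beta}.$$

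Next I would carry out the reduction. Set $s=\sum_{j=1}^{m}\alpha_j\ge 1$, $\alpha_j':=\alpha_j/s$ (so $\sum_{j}\alpha_j'=1$ with each $\alpha_j'>0$) and $B_{ij}:=A_{ij}^{(s)}$. By the fact above each $B_{ij}$ defines an operator on $L$, and since $(a^{s})^{\alpha_j'}=a^{\alpha_j}$ we have $A_{ij}^{(\alpha_j)}=B_{ij}^{(\alpha_j')}$, whence
$$A=\Big(B_{11}^{(\alpha_1')}\circ\cdots\circ B_{1m}^{(\alpha_m')}\Big)\cdots\Big(B_{k1}^{(\alpha_1')}\circ\cdots\circ B_{km}^{(\alpha_m')}\Big).$$
Because $\sum_j\alpha_j'=1$, Theorems~\ref{DPBfs}, \ref{special_case} and \ref{refinement} apply verbatim to this expression: $A$ defines an operator on $L$ and satisfies (\ref{basic2}), (\ref{norm2_ref}) and (\ref{spectral2_ref}) with $B_{ij}$, $\alpha_j'$ in place of $A_{ij}$, $\alpha_j$.

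Finally I would transfer these inequalities back to the original data. For each $j$ put $C_j:=(A_{1j}\cdots A_{kj})^{(s)}$. Applying $\sum_k t_k^{s}\le\big(\sum_k t_k\big)^{s}$ entrywise to the matrix products gives $B_{1j}\cdots B_{kj}=A_{1j}^{(s)}\cdots A_{kj}^{(s)}\le C_j$, hence $(B_{1j}\cdots B_{kj})^{(\alpha_j')}\le C_j^{(\alpha_j')}=(A_{1j}\cdots A_{kj})^{(\alpha_j)}$; by monotonicity of the Hadamard product this upgrades the version of (\ref{basic2}) for the $B_{ij}$ to exactly (\ref{basic2}) for the $A_{ij}$, and monotonicity of $\|\cdot\|$ and of $\rho$ on positive operators then yields the first inequalities in (\ref{norm2_ref}) and (\ref{spectral2_ref}). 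For the second inequalities, $(A_{1j}\cdots A_{kj})^{(\alpha_j)}=C_j^{(\alpha_j')}$ with $\sum_j\alpha_j'=1$, so Theorem~\ref{special_case} gives $\big\|\,C_1^{(\alpha_1')}\circ\cdots\circ C_m^{(\alpha_m')}\big\|\le\prod_{j}\|C_j\|^{\alpha_j'}$ and the analogous bound for $\rho$; using the fact above once more (with $\beta=s$) to estimate $\|C_j\|=\|(A_{1j}\cdots A_{kj})^{(s)}\|\le\|A_{1j}\cdots A_{kj}\|^{s}$ and $\rho(C_j)\le\rho(A_{1j}\cdots A_{kj})^{s}$, and noting $s\alpha_j'=\alpha_j$, produces the right-hand sides of (\ref{norm2_ref}) and (\ref{spectral2_ref}).

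The main obstacle is the spectral-radius half of the auxiliary fact, namely $\rho(C^{(\beta)})\le\rho(C)^{\beta}$ for $\beta\ge 1$: the naive entrywise estimate loses too much, and one genuinely needs the iterate comparison $(C^{(\beta)})^{n}\le(C^{n})^{(\beta)}$ together with Gelfand's formula. Everything else is monotonicity bookkeeping plus the convexity inequality $\sum_k t_k^{\beta}\le(\sum_k t_k)^{\beta}$.
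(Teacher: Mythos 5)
Your proof is correct. Note first that the paper itself gives no argument for Theorem~\ref{DP}: it is quoted as following from \cite[Theorem 3.3]{DP05} and \cite[Theorem 5.1 and Remark 5.2]{P06}, so what you have produced is a self-contained derivation of a result the paper only cites. Your argument is sound at every step: the observation $0\le c_{pq}e_p\le Ce_q$ with $\|e_p\|_L=\|e_q\|_L=1$ (this is exactly where the hypothesis $L\in\mathcal{L}$ enters) gives $c_{pq}\le\|C\|$ and hence boundedness of $C^{(\beta)}$ with $\|C^{(\beta)}\|\le\|C\|^{\beta}$ for $\beta\ge1$; the path-sum comparison $(C^{(\beta)})^{n}\le(C^{n})^{(\beta)}$ via superadditivity of $t\mapsto t^{\beta}$ combined with Gelfand's formula correctly repairs the spectral-radius estimate, where the naive bound $\rho(C^{(\beta)})\le\|C\|^{\beta-1}\rho(C)$ would indeed be too weak; and the substitution $B_{ij}=A_{ij}^{(s)}$, $\alpha_j'=\alpha_j/s$ cleanly reduces everything to the normalized case covered by Theorems~\ref{DPBfs}--\ref{refinement}, with the transfer back to the $A_{ij}$ again handled by superadditivity and monotonicity. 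This is essentially the known mechanism behind the cited results (bounded entries on spaces in $\mathcal{L}$ plus a Gelfand-type iteration to pass from the norm inequality to the spectral-radius inequality), packaged slightly differently: the references normalize by splitting off a scalar factor $\prod_j\|A_j\|^{\alpha_j-\alpha_j/s}$, whereas you absorb the excess into a Hadamard power of the operators themselves, which has the advantage of yielding the entrywise inequality (\ref{basic2}) in its exact stated form rather than only the norm and spectral-radius consequences.
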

\begin{corollary}
Given $L \in \mathcal{L}$, let $A_1, \ldots , A_m$ be
non-negative matrices that define operators on $L$ and
$\alpha _1$, $\alpha _2$,..., $\alpha _m$ positive numbers such that
$\sum_{i=1}^m \alpha _i \ge 1$. Then the matrix $A_1 ^{( \alpha _1)} \circ A_2 ^{(\alpha _2)} \circ \cdots \circ A_m ^{(\alpha _m)}$ defines a positive operator on $L$ and the
 inequalities (\ref{gl1nrm}) and (\ref{gl1vecr}) hold.
\label{dp}
\end{corollary}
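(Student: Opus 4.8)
The plan is to obtain this corollary as the special case $k=1$ of Theorem~\ref{DP}. Put $A_{1j}:=A_j$ for $j=1,\ldots,m$; then Theorem~\ref{DP} applies (the hypothesis $\sum_{j=1}^m\alpha_j\ge 1$ is exactly the one required there), and it tells us that the matrix
$$A:=A_{11}^{(\alpha_1)}\circ\cdots\circ A_{1m}^{(\alpha_m)}=A_1^{(\alpha_1)}\circ\cdots\circ A_m^{(\alpha_m)}$$
defines an operator on $L$ and satisfies the inequalities (\ref{basic2}), (\ref{norm2_ref}) and (\ref{spectral2_ref}). Since each $A_j$ is a non-negative matrix, $A$ has non-negative entries, so once it is known to define an operator on $L$ it automatically maps $L_+$ into $L_+$; hence $A$ is a positive operator on $L$, which disposes of the first assertion.

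For the inequalities, I would simply read off (\ref{norm2_ref}) and (\ref{spectral2_ref}) in the case $k=1$. With only one factor the ``long products'' $A_{11}\cdots A_{k1}$ collapse to the single matrices $A_{1j}=A_j$, so the intermediate operator $(A_{11}\cdots A_{k1})^{(\alpha_1)}\circ\cdots\circ(A_{1m}\cdots A_{km})^{(\alpha_m)}$ appearing in the middle of (\ref{norm2_ref}) and (\ref{spectral2_ref}) is nothing but $A$ itself. Thus the left-hand inequalities in (\ref{norm2_ref}) and (\ref{spectral2_ref}) reduce to equalities, and the right-hand inequalities read
$$\|A\|\le\|A_1\|^{\alpha_1}\cdots\|A_m\|^{\alpha_m},\qquad \rho(A)\le\rho(A_1)^{\alpha_1}\cdots\rho(A_m)^{\alpha_m},$$
which are precisely (\ref{gl1nrm}) and (\ref{gl1vecr}).

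There is essentially no obstacle here: the content of the corollary is entirely contained in Theorem~\ref{DP}, and the only point worth flagging is that (\ref{gl1nrm}) and (\ref{gl1vecr}) were stated in Theorem~\ref{special_case} only under the normalization $\sum_j\alpha_j=1$, whereas the matrix version permits the weaker condition $\sum_j\alpha_j\ge 1$ --- an extension already absorbed into Theorem~\ref{DP} via \cite[Theorem 5.1 and Remark 5.2]{P06}. So the proof is a one-line specialization once Theorem~\ref{DP} is available.
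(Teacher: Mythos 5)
Your proposal is correct and coincides with the paper's (implicit) argument: Corollary~\ref{dp} is stated in the paper without proof precisely because it is the $k=1$ case of Theorem~\ref{DP}, and your specialization --- noting that for $k=1$ the middle term of (\ref{norm2_ref}) and (\ref{spectral2_ref}) collapses to $A$ itself, yielding (\ref{gl1nrm}) and (\ref{gl1vecr}) --- is exactly the intended derivation.
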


Note that Theorems \ref{DPBfs}, \ref{refinement} and \ref{DP} and its special cases proved to be quite useful in different contexts (see e.g. \cite{EHP90}, \cite{EJS88},  \cite{DP05}, \cite{P06}, \cite{P11}, \cite{S11}, \cite{DP10}, \cite{P12}, \cite{CZ15}), \cite{DP16}, \cite{P16a} and the references cited there). 
They will also be some of the main tools in the current article. 


We will frequently use the equality $\rho (A B) = \rho (B A)$ that holds for all bounded operators $A$ and $B$ on a Banach space.


\section{Results for positive kernel operators on Banach function spaces}

Let $\alpha _1$, $\alpha _2$,..., $\alpha _m$ be positive numbers 
such that $\sum_{j=1}^m \alpha _j = 1$ and let $A_1, \ldots , A_m$ be positive kernel operators on a Banach function space $L$. 
We define positive kernel operators $B_1, \ldots , B_m $  on $L$ in the following way
\begin{eqnarray}
B_1 &=& A_1 ^{( \alpha _1)} \circ A_2 ^{(\alpha _2)} \circ \cdots \circ A_m ^{(\alpha _m)}, \nonumber \\
 B_2 &=& A_2 ^{( \alpha _1)} \circ A_3 ^{(\alpha _2)} \circ \cdots \circ A_{1} ^{(\alpha _m)},  \nonumber\\
 &\ldots& \nonumber \\  
B_m &= &A_m ^{( \alpha _1)} \circ A_1 ^{(\alpha _2)} \circ \cdots \circ A_{m-1} ^{(\alpha _m)}. \nonumber
\end{eqnarray}
In short,
\be
B_i = A_i ^{( \alpha _1)} \circ A_{i+1} ^{(\alpha _2)} \circ \cdots \circ A_{m} ^{(\alpha _{m-i+1})} \circ A_{1} ^{(\alpha _{m-i+2})}\circ \cdots  \circ A_{i-1} ^{(\alpha _m)}
\label{Bji}
\ee
for $i=1, \ldots , m$.

First we generalize inequality (\ref{genHuBfs}) by  applying (\ref{spectral2}). 
\begin{theorem} Let $A_1, \ldots , A_m$ be positive kernel operators on a Banach function space $L$ and let  $\alpha _1$, $\alpha _2$,..., $\alpha _m$ be positive numbers 
such that $\sum_{j=1}^m \alpha _j = 1$. If $B_1, \ldots , B_m$ are operators defined in (\ref{Bji}), 
then 
\be 
\rho (B_1 B_2 \cdots B_m)   \le \rho (A_1 A_2 \cdots A_m) .  
\label{genDP16}
\ee
\label{genBfsp}
\end{theorem}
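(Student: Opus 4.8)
The plan is to apply inequality (\ref{spectral2}) from Theorem \ref{DPBfs} to the product $B_1 B_2 \cdots B_m$ with a carefully chosen indexing of the double-indexed family $\{A_{ij}\}$. First I would observe that, by definition (\ref{Bji}), each $B_i$ is itself a Hadamard weighted geometric mean $B_i = A_{i}^{(\alpha_1)} \circ A_{i+1}^{(\alpha_2)} \circ \cdots$, where the lower indices of the operators $A$ run cyclically through $1, 2, \ldots, m$ starting at $i$. So the product $B_1 B_2 \cdots B_m$ has exactly the form $\left(A_{11}^{(\alpha_1)} \circ \cdots \circ A_{1m}^{(\alpha_m)}\right) \cdots \left(A_{m1}^{(\alpha_1)} \circ \cdots \circ A_{mm}^{(\alpha_m)}\right)$ appearing in Theorem \ref{DPBfs}, once we set $A_{ij}$ (the operator in the $i$-th factor $B_i$ with Hadamard exponent $\alpha_j$) to be $A_{i+j-1}$, with the lower index read modulo $m$ in the range $\{1, \ldots, m\}$.

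Then (\ref{spectral2}) gives
\be
\rho(B_1 B_2 \cdots B_m) \le \prod_{j=1}^m \rho\left(A_{1j} A_{2j} \cdots A_{mj}\right)^{\alpha_j}. \nonumber
\ee
The key point is now to identify each inner product $A_{1j} A_{2j} \cdots A_{mj}$. For fixed $j$, as $i$ runs from $1$ to $m$, the index $i+j-1 \pmod m$ runs through all of $\{1, \ldots, m\}$ exactly once, namely in the cyclic order $j, j+1, \ldots, m, 1, \ldots, j-1$. Hence $A_{1j} A_{2j} \cdots A_{mj} = A_j A_{j+1} \cdots A_m A_1 \cdots A_{j-1}$, which is a cyclic permutation of the product $A_1 A_2 \cdots A_m$. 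Using the standard identity $\rho(CD) = \rho(DC)$ — equivalently, that the spectral radius is invariant under cyclic permutation of a product of bounded operators, which is recalled in the preliminaries — each such factor satisfies $\rho\left(A_j \cdots A_{j-1}\right) = \rho(A_1 A_2 \cdots A_m)$.

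Substituting back, the right-hand side becomes $\prod_{j=1}^m \rho(A_1 A_2 \cdots A_m)^{\alpha_j} = \rho(A_1 A_2 \cdots A_m)^{\sum_j \alpha_j} = \rho(A_1 A_2 \cdots A_m)$, since $\sum_{j=1}^m \alpha_j = 1$, which yields (\ref{genDP16}). The only real obstacle is bookkeeping: making the cyclic-indexing substitution $A_{ij} = A_{i+j-1 \bmod m}$ precise and verifying that for each fixed $j$ the map $i \mapsto i+j-1 \pmod m$ is a bijection of $\{1, \ldots, m\}$ producing a cyclic rotation of $(A_1, \ldots, A_m)$; once that combinatorial identification is in place, the inequality follows immediately from (\ref{spectral2}) and the cyclic invariance of $\rho$. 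I would present the argument by first writing out $B_i$ in the form matching $\{A_{ij}\}$, then invoking (\ref{spectral2}), then simplifying the product using $\sum \alpha_j = 1$ and $\rho(CD)=\rho(DC)$.
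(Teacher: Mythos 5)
Your proposal is correct and follows essentially the same route as the paper: apply (\ref{spectral2}) with the cyclic indexing $A_{ij}=A_{i+j-1 \bmod m}$, note that each column product $A_jA_{j+1}\cdots A_{j-1}$ is a cyclic permutation of $A_1\cdots A_m$ so its spectral radius is unchanged, and conclude using $\sum_j\alpha_j=1$. The paper's proof is just a more compressed version of the same computation.
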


\begin{proof}
By  (\ref{spectral2}) we have
$$\rho (B_1 \cdots B_m) \le \rho (A_1 A_2\ldots A_m) ^{\alpha _1} \rho (A_2\ldots A_mA_1)^{\alpha _2} \cdots   \rho (A_m A_1\ldots A_{m-1})^{\alpha _m}$$
$$= \rho  (A_1 \cdots A_m),$$
since  $\sum_{j=1}^m \alpha _j = 1$ and this completes the proof.
\end{proof}
In fact, if we apply in the proof above instead of (\ref{spectral2}) the inequalities (\ref{spectral2_ref}),  (\ref{norm2_ref}),  (\ref{glnum_ref}), we obtain the following refinement of (\ref{genDP16}) and its versions for the operator norm and numerical radius.
\begin{theorem} Let $L$, $A_j$, $B_j$ and $\alpha _j$, $j=1,\ldots , m$ be as in Theorem \ref{genBfsp} and let $P_j = A_j \ldots A_m A_1 \ldots A_{j-1}$ for $j=1,\ldots , m$. 
 Then
\begin{eqnarray}
\label{BPA}
\rho (B_1 B_2 \cdots B_m)  \le  \rho (P_1 ^{(\alpha _1)} \circ P_2 ^{(\alpha _2)} \circ \cdots  \circ P_m ^{(\alpha _m)})\le  \rho (A_1 A_2 \cdots A_m), 
\end{eqnarray} 
\be 
\|B_1 B_2 \cdots B_m \|   \le  \|P_1 ^{(\alpha _1)} \circ P_2 ^{(\alpha _2)} \circ \cdots  \circ P_m ^{(\alpha _m)} \| \le   \|P_1\|^{\alpha _1} \|P_2\|^{\alpha _2}\cdots  \|P_m\|^{\alpha _m}.
\label{normalf}
\ee
If, in addition, $L=L^2(X, \mu)$, then
\be 
w(B_1 B_2 \cdots B_m )   \le  w \left(P_1 ^{(\alpha _1)} \circ P_2 ^{(\alpha _2)} \circ \cdots  \circ P_m ^{(\alpha _m)}\right)  \le  w (P_1)^{\alpha _1} w(P_2)^{\alpha _2} \cdots w (P_m)^{\alpha _m}.
\label{numalf}
\ee
\label{ref_Prad}
\end{theorem}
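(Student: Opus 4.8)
The plan is to recognise the product $B_1 B_2 \cdots B_m$ as an operator of precisely the form to which Theorem \ref{refinement} applies and then to read off the three chains of inequalities directly; the only work beyond the proof of Theorem \ref{genBfsp} is an elementary index computation together with the cyclic invariance of the spectral radius. Concretely, I would take $k = m$ and define the $m \times m$ array of positive kernel operators $A_{ij} := A_{i+j-1}$, where the subscript $i+j-1$ is reduced cyclically into $\{1, \ldots, m\}$. Comparing with the formula (\ref{Bji}) for $B_i$, the Hadamard weighted geometric mean of the $i$-th row, $A_{i1}^{(\alpha_1)} \circ A_{i2}^{(\alpha_2)} \circ \cdots \circ A_{im}^{(\alpha_m)}$, is exactly $B_i$; hence with this choice the operator $A$ appearing in Theorems \ref{DPBfs} and \ref{refinement} equals $B_1 B_2 \cdots B_m$. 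Next I would compute the $j$-th column product, obtaining $A_{1j} A_{2j} \cdots A_{mj} = A_j A_{j+1} \cdots A_{j+m-1} = A_j A_{j+1} \cdots A_m A_1 \cdots A_{j-1} = P_j$. Therefore the inequalities (\ref{spectral2_ref}), (\ref{norm2_ref}) and (\ref{glnum_ref}) of Theorem \ref{refinement} apply verbatim and give
\[ \rho(B_1 B_2 \cdots B_m) \le \rho\bigl(P_1^{(\alpha_1)} \circ \cdots \circ P_m^{(\alpha_m)}\bigr) \le \rho(P_1)^{\alpha_1} \cdots \rho(P_m)^{\alpha_m}, \]
\[ \|B_1 B_2 \cdots B_m\| \le \|P_1^{(\alpha_1)} \circ \cdots \circ P_m^{(\alpha_m)}\| \le \|P_1\|^{\alpha_1} \cdots \|P_m\|^{\alpha_m}, \]
and, when $L = L^2(X,\mu)$, the analogous chain with $w$ in place of $\rho$ and of $\|\cdot\|$.

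To conclude the spectral-radius statement (\ref{BPA}) I would invoke the identity $\rho(XY) = \rho(YX)$ recalled at the end of Section 2, with $X = A_j \cdots A_m$ and $Y = A_1 \cdots A_{j-1}$: this gives $\rho(P_j) = \rho(A_1 A_2 \cdots A_m)$ for every $j$, so that $\prod_{j=1}^m \rho(P_j)^{\alpha_j} = \rho(A_1 A_2 \cdots A_m)^{\sum_{j=1}^m \alpha_j} = \rho(A_1 A_2 \cdots A_m)$ since $\sum_{j=1}^m \alpha_j = 1$; combined with the display above this yields (\ref{BPA}). For (\ref{normalf}) and (\ref{numalf}) no such simplification is available---neither the operator norm nor the numerical radius is cyclically invariant---so the right-hand sides stay in terms of the $P_j$, and the proof is finished.

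I do not expect a genuine obstacle here: all the analytic content is already contained in Theorems \ref{DPBfs}--\ref{refinement}, and what remains is choosing the array and a routine check of indices. The one point that needs a little care is verifying that the single cyclic relabelling $A_{ij} = A_{i+j-1}$ simultaneously reproduces every $B_i$ as a row mean and every $P_j$ as a column product; this is a finite, mechanical verification.
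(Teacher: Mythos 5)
Your proposal is correct and follows exactly the paper's route: the paper proves this theorem by substituting the refined inequalities (\ref{spectral2_ref}), (\ref{norm2_ref}), (\ref{glnum_ref}) of Theorem \ref{refinement} into the argument of Theorem \ref{genBfsp}, which is precisely your cyclic array $A_{ij}=A_{i+j-1}$ with row means $B_i$ and column products $P_j$, followed by $\rho(P_j)=\rho(A_1\cdots A_m)$ and $\sum_j\alpha_j=1$.
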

In the special case $\alpha_i =\frac{1}{m}$ for $i=1, \ldots , m$, we obtain the following refinement of (\ref{genHuBfs}) and its operator norm and numerical radius versions.
\begin{corollary} 
Let $A_1, \ldots , A_m$ be positive kernel operators on a Banach function space $L$ and let  $P_j = A_j \ldots A_m A_1 \ldots A_{j-1}$ for $j=1,\ldots , m$. 
Then 
$$\rho \left(A_1^{\left(\frac{1}{m}\right)} \circ A_2^{\left(\frac{1}{m}\right)} \circ \cdots \circ 
A_m^{\left(\frac{1}{m}\right)}\right) $$
\be
\le \rho \left(P_1^{\left(\frac{1}{m}\right)} \circ P_2^{\left(\frac{1}{m}\right)} \circ \cdots \circ 
P_m^{\left(\frac{1}{m}\right)}\right)^{\frac{1}{m}}   \le \rho (A_1 A_2 \cdots A_m)^{\frac{1}{m}} .  
\label{APA}
\ee
$$\left\| \left(A_1^{\left(\frac{1}{m}\right)} \circ A_2^{\left(\frac{1}{m}\right)} \circ \cdots \circ A_m^{\left(\frac{1}{m}\right)}\right)^m \right\| $$ 
\be 
 \le  \left\|P_1 ^{(\frac{1}{m})} \circ P_2 ^{(\frac{1}{m})} \circ \cdots  \circ P_m ^{(\frac{1}{m})} \right\| \le   \|P_1\|^{\frac{1}{m}} \|P_2\|^{\frac{1}{m}}\cdots  \|P_m\|^{\frac{1}{m}}.
\label{normalf_m}
\ee
If, in addition, $L=L^2(X, \mu)$, then
$$ w \left(\left(A_1^{\left(\frac{1}{m}\right)} \circ A_2^{\left(\frac{1}{m}\right)} \circ \cdots \circ A_m^{\left(\frac{1}{m}\right)}\right)^m \right)$$ 
\be 
 \le  w\left(P_1 ^{(\frac{1}{m})} \circ P_2 ^{(\frac{1}{m})} \circ \cdots  \circ P_m ^{(\frac{1}{m})} \right) \le   w(P_1)^{\frac{1}{m}} w(P_2)^{\frac{1}{m}}\cdots  w(P_m)^{\frac{1}{m}}.
\label{num_m}
\ee
\label{m-ji}
\end{corollary}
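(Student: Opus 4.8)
The plan is to obtain all three chains of inequalities as the special case $\alpha_1 = \alpha_2 = \cdots = \alpha_m = \frac{1}{m}$ of Theorem \ref{ref_Prad}, after identifying the product $B_1 B_2 \cdots B_m$ appropriately. First I would observe that when every $\alpha_j$ equals $\frac{1}{m}$, each operator $B_i$ defined in (\ref{Bji}) becomes
$$B_i = A_i^{(\frac{1}{m})} \circ A_{i+1}^{(\frac{1}{m})} \circ \cdots \circ A_{i-1}^{(\frac{1}{m})},$$
which is the same Hadamard geometric mean $A_1^{(1/m)} \circ A_2^{(1/m)} \circ \cdots \circ A_m^{(1/m)}$ regardless of $i$, since the Hadamard product is commutative. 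Hence $B_1 = B_2 = \cdots = B_m =: B$, and therefore $B_1 B_2 \cdots B_m = B^m = \left(A_1^{(1/m)} \circ \cdots \circ A_m^{(1/m)}\right)^m$.

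Next I would invoke Theorem \ref{ref_Prad} directly with this choice of weights. The middle and right-hand terms in (\ref{BPA}), (\ref{normalf}), (\ref{numalf}) involve $P_1^{(\alpha_1)} \circ \cdots \circ P_m^{(\alpha_m)}$, which with $\alpha_j = \frac{1}{m}$ becomes exactly $P_1^{(1/m)} \circ \cdots \circ P_m^{(1/m)}$, matching the middle terms of (\ref{normalf_m}) and (\ref{num_m}). For the spectral radius statement (\ref{APA}), I would additionally apply the identity $\rho(B^m) = \rho(B)^m$ (and likewise $\rho\big(\text{middle}\big) = \rho\big(P_1^{(1/m)} \circ \cdots \circ P_m^{(1/m)}\big)$ is left as is on the right side, while the left side $\rho(B_1 \cdots B_m) = \rho(B^m) = \rho(B)^m$); taking $m$-th roots throughout (\ref{BPA}) then yields
$$\rho(B) \le \rho\!\left(P_1^{(1/m)} \circ \cdots \circ P_m^{(1/m)}\right)^{1/m} \le \rho(A_1 \cdots A_m)^{1/m},$$
which is precisely (\ref{APA}) since $B = A_1^{(1/m)} \circ \cdots \circ A_m^{(1/m)}$. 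For (\ref{normalf_m}) and (\ref{num_m}) no root extraction is needed: one simply substitutes $B_1 \cdots B_m = B^m$ into (\ref{normalf}) and (\ref{numalf}).

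I do not expect any genuine obstacle here; the corollary is essentially a bookkeeping specialization. The one point requiring a line of justification is the commutativity-and-coincidence observation $B_1 = \cdots = B_m$ when all weights are equal — this relies on the Hadamard product being commutative and associative on the relevant order ideal, which is immediate from the definition of the kernel as the pointwise product $a(x,y)b(x,y)$. Everything else is a direct quotation of Theorem \ref{ref_Prad} together with the elementary facts $\rho(C^m) = \rho(C)^m$ and monotonicity of $t \mapsto t^{1/m}$.
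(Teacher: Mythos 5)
Your proposal is correct and follows exactly the paper's own route: specialize Theorem \ref{ref_Prad} to $\alpha_j=\frac{1}{m}$, note that $B_1=\cdots=B_m=A_1^{(1/m)}\circ\cdots\circ A_m^{(1/m)}$ by commutativity of the Hadamard product, so $B_1\cdots B_m$ is the $m$-th power, and use $\rho(B^m)=\rho(B)^m$ to extract the $m$-th root in the spectral radius case. The paper's proof is a terser version of the same bookkeeping.
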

\begin{proof} Since  $\alpha_i =\frac{1}{m}$ for $i=1, \ldots , m$, we have
$$B_1 = B_2 = \cdots = B_m = A_1^{\left(\frac{1}{m}\right)} \circ A_2^{\left(\frac{1}{m}\right)} \circ \cdots \circ 
A_m^{\left(\frac{1}{m}\right)}$$
and thus (\ref{BPA}) implies (\ref{APA}). Similarly the inequalities (\ref{normalf_m}) and   (\ref{num_m}) follow from  (\ref{normalf}) and   (\ref{numalf}), respectively. 
\end{proof}
\begin{remark}{\rm The inequalities (\ref{APA}) could also be deduced from the proof of  \cite[Theorem 3.4]{P16a}.
}
\end{remark}
The following special case of Theorem \ref{ref_Prad} and Corollary \ref{m-ji} refines and generalizes \cite[Theorem 2.8]{S11} for the spectral radius and points out its operator norm and numerical radius versions.
\begin{corollary} Let $A$ and $B$ be positive kernel operators on a Banach function space $L$ and let $\alpha \in [0,1]$. Then the following inequalities hold
$$ \rho \left((A^{(\alpha)} \circ B^{(1-\alpha)}) (B^{(\alpha)} \circ A^{(1-\alpha)})\right) \le  \rho \left((AB)^{(\alpha)} \circ (BA)^{(1-\alpha)}\right) \le \rho (AB), $$
$$ \rho \left(A^{(\frac{1}{2})} \circ B^{(\frac{1}{2})} \right) \le  \rho \left((AB)^{(\frac{1}{2})} \circ (BA)^{(\frac{1}{2})}\right)^{\frac{1}{2}} \le \rho (AB)^{\frac{1}{2}},$$
$$ \|(A^{(\alpha)} \circ B^{(1-\alpha)}) (B^{(\alpha)} \circ A^{(1-\alpha)})\| \le  \|(AB)^{(\alpha)} \circ (BA)^{(1-\alpha)}\| \le \|AB\|^{\alpha}\|BA\|^{1-\alpha} , $$
$$ \| \left(A^{(\frac{1}{2})} \circ B^{(\frac{1}{2})} \right)^2 \|\le  \| (AB)^{(\frac{1}{2})} \circ (BA)^{(\frac{1}{2})}\| \le \|AB\|^{\frac{1}{2}} \|BA\|^{\frac{1}{2}}.$$
If, in addition, $L=L^2(X, \mu)$, then
$$  w((A^{(\alpha)} \circ B^{(1-\alpha)}) (B^{(\alpha)} \circ A^{(1-\alpha)})) \le  w ((AB)^{(\alpha)} \circ (BA)^{(1-\alpha)}) \le w (AB)^{\alpha}w (BA)^{1-\alpha} , $$
$$ w \left( \left(A^{(\frac{1}{2})} \circ B^{(\frac{1}{2})} \right)^2 \right) \le  w\left( (AB)^{(\frac{1}{2})} \circ (BA)^{(\frac{1}{2})} \right)\le w(AB)^{\frac{1}{2}} w(BA)^{\frac{1}{2}}.$$
\label{AB-ji}
\end{corollary}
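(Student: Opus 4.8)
The plan is to obtain Corollary \ref{AB-ji} as the specialization of Theorem \ref{ref_Prad} and Corollary \ref{m-ji} to the case $m=2$, with the two operators $A_1=A$, $A_2=B$ and weights $(\alpha_1,\alpha_2)=(\alpha,1-\alpha)$ (and then $(\tfrac12,\tfrac12)$ for the second, fourth and sixth lines). First I would unwind the definitions (\ref{Bji}) for $m=2$: here $B_1=A_1^{(\alpha_1)}\circ A_2^{(\alpha_2)}=A^{(\alpha)}\circ B^{(1-\alpha)}$ and $B_2=A_2^{(\alpha_1)}\circ A_1^{(\alpha_2)}=B^{(\alpha)}\circ A^{(1-\alpha)}$, while the products $P_j=A_j\cdots A_mA_1\cdots A_{j-1}$ become $P_1=A_1A_2=AB$ and $P_2=A_2A_1=BA$. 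Substituting these into (\ref{BPA}), (\ref{normalf}) and (\ref{numalf}) immediately yields the first, third and fifth displayed inequalities of the corollary, once one notes that $\rho(AB)^{\alpha}\rho(BA)^{1-\alpha}=\rho(AB)$ because $\rho(AB)=\rho(BA)$ and $\alpha+(1-\alpha)=1$; the corresponding collapse does \emph{not} happen for the norm or numerical radius, which is why those bounds retain the form $\|AB\|^{\alpha}\|BA\|^{1-\alpha}$ and $w(AB)^{\alpha}w(BA)^{1-\alpha}$.

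For the symmetric case $\alpha=\tfrac12$ I would instead invoke Corollary \ref{m-ji} with $m=2$: then $B_1=B_2=A^{(1/2)}\circ B^{(1/2)}$, so $B_1B_2=\bigl(A^{(1/2)}\circ B^{(1/2)}\bigr)^2$, and $P_1=AB$, $P_2=BA$. Plugging into (\ref{APA}) gives
$$\rho\bigl(A^{(1/2)}\circ B^{(1/2)}\bigr)\le \rho\bigl((AB)^{(1/2)}\circ (BA)^{(1/2)}\bigr)^{1/2}\le \rho(AB)^{1/2},$$
where I have used $\rho\bigl(B_1^2\bigr)=\rho(B_1)^2$ for the bounded operator $B_1$ to rewrite the left-hand side, and $\rho(P_1P_2)^{1/2}\le\bigl(\|P_1\|\,\|P_2\|\bigr)^{\cdots}$ is replaced by the sharper middle term coming directly from (\ref{APA}). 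The second displayed inequality of the corollary follows. Likewise (\ref{normalf_m}) and (\ref{num_m}) give the fourth and sixth displayed inequalities, again after observing $\|B_1^2\|$ and $w(B_1^2)$ are exactly the quantities on the left.

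The only genuine point requiring care — and the step I expect to be the mild obstacle — is the reduction $\rho(AB)^{\alpha}\rho(BA)^{1-\alpha}=\rho(AB)$ and, more importantly, checking that the middle quantities in Theorem \ref{ref_Prad} really do specialize to $(AB)^{(\alpha)}\circ(BA)^{(1-\alpha)}$ and $(AB)^{(1/2)}\circ(BA)^{(1/2)}$ as written; this is a matter of correctly matching the index conventions in (\ref{Bji}) and the definition of $P_j$ with $m=2$, together with recalling from the preliminaries that $A^{(1/2)}\circ B^{(1/2)}$ is a positive kernel operator defined on all of $L$, so that its square and the associated products, norms, spectral radii and numerical radii are all well defined. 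Everything else is a direct substitution, so the proof is short: invoke Theorem \ref{ref_Prad} (resp.\ Corollary \ref{m-ji}) with the stated data, simplify the exponents using $\rho(XY)=\rho(YX)$, and rewrite $B_1B_2$ as the indicated Hadamard-mean expression.
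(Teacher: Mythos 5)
Your proposal is correct and coincides with the paper's own (implicit) argument: Corollary \ref{AB-ji} is stated there precisely as the $m=2$ specialization of Theorem \ref{ref_Prad} (with $\alpha_1=\alpha$, $\alpha_2=1-\alpha$, so $B_1=A^{(\alpha)}\circ B^{(1-\alpha)}$, $B_2=B^{(\alpha)}\circ A^{(1-\alpha)}$, $P_1=AB$, $P_2=BA$) and of Corollary \ref{m-ji} for the symmetric weights, exactly as you describe. The only point worth a footnote is that the cited theorems assume the weights are strictly positive, so the endpoint cases $\alpha\in\{0,1\}$ of the corollary should be checked separately (they are trivial, since with the convention $0^0=1$ one has $A^{(1)}\circ B^{(0)}=A$).
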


In the case of non-negative matrices that define operators on sequence spaces Theorem \ref{DP} yields additional results in the case $\sum_{i=1}^m \alpha _i \ge 1$. More precisely, by applying  Theorem \ref{DP} instead of Theorem \ref{refinement}  in the proof of
Theorem \ref{ref_Prad}, we obtain the following result. 

\begin{theorem}
Given $L \in \mathcal{L}$, let $A_1, \ldots , A_m$ be
non-negative matrices that define operators on $L$ and
$\alpha _1$, $\alpha _2$,..., $\alpha _m$ positive numbers such that
$\alpha: = \sum_{i=1}^m \alpha _i \ge 1$. If $B_1, \ldots , B_m$ are the operators defined by (\ref{Bji}) and  $P_j = A_j \ldots A_m A_1 \ldots A_{j-1}$ for $j=1,\ldots , m$, then the inequalities

\be
\rho (B_1 B_2 \cdots B_m)   \le  \rho (P_1 ^{(\alpha _1)} \circ P_2 ^{(\alpha _2)} \circ \cdots  \circ P_m ^{(\alpha _m)})\le  \rho (A_1 A_2 \cdots A_m)^{\alpha},
\label{BPA_matr}
\ee 
 and (\ref{normalf}) hold.

If, in addition, $L=l^2(R)$, then
\be 
w(B_1 B_2 \cdots B_m )   \le  w \left(P_1 ^{(\alpha _1)} \circ P_2 ^{(\alpha _2)} \circ \cdots  \circ P_m ^{(\alpha _m)}\right).  
\label{numalfa}
\ee

\label{matrixalpha}
\end{theorem}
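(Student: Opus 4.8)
The plan is to mimic exactly the proof of Theorem~\ref{ref_Prad}, but to feed the hypotheses into Theorem~\ref{DP} (which is the matrix analogue valid under the weaker assumption $\sum_{i=1}^m\alpha_i\ge 1$) rather than into Theorem~\ref{refinement}. First I would observe that since each $B_i$ defined in (\ref{Bji}) is a Hadamard weighted geometric mean of the matrices $A_1,\ldots,A_m$ with exponents $\alpha_1,\ldots,\alpha_m$ summing to $\alpha\ge1$, Corollary~\ref{dp} guarantees that each $B_i$ does define a positive operator on $L$, so the product $B_1B_2\cdots B_m$ makes sense on $L$ and has finite spectral radius.

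Next I would set up the product $B_1B_2\cdots B_m$ as a single instance of the matrix $A$ appearing in Theorem~\ref{DP}: with $k=m$ rows and $m$ columns, the $(i,j)$ entry of the array $\{A_{ij}\}$ is chosen to be the $j$-th factor in the cyclic listing defining $B_i$, i.e.\ $A_{ij}=A_{i+j-1}$ with indices read cyclically mod $m$. Then the $i$-th Hadamard factor of the displayed product in Theorem~\ref{DP} is precisely $B_i$, and the $j$-th column product $A_{1j}A_{2j}\cdots A_{mj}$ is a cyclic shift of $A_1A_2\cdots A_m$, hence equals $P_j$ up to the cyclic-shift choice; one checks the indices line up so that the column products are exactly $P_1,\ldots,P_m$. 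Theorem~\ref{DP} then gives $B_1\cdots B_m\le P_1^{(\alpha_1)}\circ\cdots\circ P_m^{(\alpha_m)}$ together with inequalities (\ref{norm2_ref}) and (\ref{spectral2_ref}); since $\rho$ and $\|\cdot\|$ are monotone on positive operators (and $w$ is monotone on $L^2$), the first inequality in (\ref{BPA_matr}) and the first inequality in (\ref{normalf}) follow immediately, as does (\ref{numalfa}) when $L=l^2(R)$.

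For the \emph{right-hand} inequality in (\ref{BPA_matr}) I would apply the spectral-radius half of (\ref{spectral2_ref}) (as supplied by Theorem~\ref{DP}) to get
\[
\rho\bigl(P_1^{(\alpha_1)}\circ\cdots\circ P_m^{(\alpha_m)}\bigr)\le \rho(P_1)^{\alpha_1}\cdots\rho(P_m)^{\alpha_m},
\]
and then use $\rho(P_j)=\rho(A_1A_2\cdots A_m)$ for every $j$ (each $P_j$ is a cyclic permutation of the product $A_1\cdots A_m$, and $\rho(CD)=\rho(DC)$), so the right side collapses to $\rho(A_1\cdots A_m)^{\sum_j\alpha_j}=\rho(A_1\cdots A_m)^{\alpha}$. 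The right-hand inequality of (\ref{normalf}) is literally the norm half of (\ref{norm2_ref}) and needs no further work. The one genuine subtlety — the place I expect to spend the most care — is the bookkeeping of cyclic indices: verifying that the array $A_{ij}=A_{i+j-1 \bmod m}$ makes the row Hadamard products equal to the $B_i$ of (\ref{Bji}) \emph{and} simultaneously makes the column ordinary products equal to the $P_j=A_jA_{j+1}\cdots A_mA_1\cdots A_{j-1}$, with no off-by-one slip; everything else is a direct invocation of Theorem~\ref{DP} and the monotonicity and cyclic-invariance properties already recorded in Section~2.
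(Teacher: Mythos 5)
Your proposal is correct and follows essentially the same route as the paper, which simply replaces Theorem~\ref{refinement} by Theorem~\ref{DP} in the proof of Theorem~\ref{ref_Prad}; your cyclic array $A_{ij}=A_{i+j-1\bmod m}$ is exactly the right bookkeeping, and you correctly note that (\ref{numalfa}) must come from the domination $B_1\cdots B_m\le P_1^{(\alpha_1)}\circ\cdots\circ P_m^{(\alpha_m)}$ together with monotonicity of $w$, since Theorem~\ref{DP} does not supply a numerical-radius chain for $\sum_j\alpha_j\ge 1$ (and indeed the second numerical-radius inequality fails in general, as the paper remarks).
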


\begin{remark} {\rm (i) In the special case case $\alpha _i=1$ for all $i=1, \ldots , m$ the inequality (\ref{BPA_matr}) reduces to 
\be 
\rho (A_1 \circ A_2 \circ \cdots \circ A_m) \le  \rho (P_1  \circ P_2 \circ \cdots  \circ P_m)^{1/m}  \le \rho (A_1 A_2 \cdots A_m).  
\label{genHu}
\ee

Further refinements of (\ref{genHu}) and of its operator norm and numerical radius counterparts were obtained in \cite[Theorem 3.2]{DP16}.

(ii) It follows from the example in \cite[Remark 3.8]{DP16}, that in the case $ \sum_{i=1}^m \alpha _i \ge 1$, the analogue of (\ref{normalf}) for the numerical radius does not hold.
}
\end{remark}

\section{Further results on $L^2(X, \mu)$ }

In this section we obtain additional inequalities for positive kernel operators on  $L^2(X, \mu)$, which generalize several results from \cite{DP16}, \cite{Hu11}, \cite{CZ15} that were proved there for (finite or infinite) non-negative  matrices that define operators on sequence spaces.

If $A$ is a positive kernel operator on  $L^2(X, \mu)$ with a kernel $a(x,y)$, then its (Hilbert space) adjoint $A^*$ is also a positive  kernel operator on  $L^2(X, \mu)$ with a kernel $a(y,x)$. So the operator $S=A^*A$ is again 
a positive kernel operator on $L^2(X, \mu)$ with a kernel $s(x,y) =\int _X a(z,x) a(z,y) \; d\mu (z)$ and $S$ is also a positive semidefinite operator.
In what follows we will use the following well-known equalities 
\be
\|A\|^2 =\|AA^*\|=\|A^*A\|=\rho (AA^*)=\rho (A^*A).
\label{eqT}
\ee
The inequalities (\ref{goodref}) bellow are the kernel version of a matrix result \cite[Corollary 3.5]{DP16}  and they also generalize and refine  \cite[Proposition 2.4]{CZ15} and 
\cite[Theorem 4]{Hu11}, while the inequalities (\ref{norm_ref}) refine (\ref{gl1nrm}) in the 
 $L^2(X, \mu)$ case.
\begin{theorem}
\label{nice}
 Let $A_1, \ldots , A_m$ be positive kernel operators on $L^2(X, \mu)$ and let $S_j =A_j ^* A_j$, $Q_j = S_j \ldots S_m S_1 \ldots S_{j-1}$ for $j=1,\ldots , m$. 
Assume that $\alpha _1$, $\alpha _2$,..., $\alpha _m$ are positive numbers 
such that $\sum_{j=1}^m \alpha _j = 1$. Then we have
$$ \|A_1 ^{( \frac{1}{m})} \circ A_2 ^{(\frac{1}{m})} \circ \cdots \circ A_m ^{(\frac{1}{m})} \| \le \rho (S_1 ^{( \frac{1}{m})} \circ S_2 ^{(\frac{1}{m})} \circ \cdots \circ S_m ^{(\frac{1}{m})})^{\frac{1}{2}}$$
\be
\le \rho (Q_1 ^{( \frac{1}{m})} \circ Q_2 ^{(\frac{1}{m})} \circ \cdots \circ Q_m ^{(\frac{1}{m})})^{\frac{1}{2m}} \le \rho (S_1 S_2\cdots S_m )^{\frac{1}{2m}} 
\label{goodref}
\ee
and

\be
\|A_1 ^{( \alpha _1)} \circ \cdots \circ A_m ^{(\alpha _m)} \| \le \rho (S_1 ^{( \alpha _1)}  \circ \cdots \circ S_m ^{(\alpha _m)})^{\frac{1}{2}}
 \le  \|A_1\|^{ \alpha _1}  \cdots \|A_m\|^{\alpha _m}
\label{norm_ref}
\ee
\end{theorem}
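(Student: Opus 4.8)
The plan is to reduce everything to the already-established Theorems \ref{special_case} and \ref{refinement} (specifically inequalities (\ref{gl1nrm}) and (\ref{spectral2_ref})) together with the Hilbert-space identities (\ref{eqT}). The starting point for the first chain (\ref{goodref}) is the observation that for positive kernel operators the Hadamard product interacts well with adjoints: $(A_i^{(1/m)}\circ\cdots\circ A_m^{(1/m)})^* = A_i^{*(1/m)}\circ\cdots\circ A_m^{*(1/m)}$, since taking the adjoint just transposes the kernel entrywise and the Hadamard product/power are defined entrywise. Writing $C = A_1^{(1/m)}\circ\cdots\circ A_m^{(1/m)}$, I would use $\|C\|^2 = \|C^*C\| = \rho(C^*C)$ from (\ref{eqT}), and then estimate $C^*C$ from above. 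The key algebraic step is that $C^* C$ is dominated (in the Hadamard-product sense, via (\ref{basic2}) applied with $k=2$) by $S_1^{(1/m)}\circ\cdots\circ S_m^{(1/m)}$; more precisely, one applies inequality (\ref{basic2}) of Theorem \ref{DPBfs} to the product of the two Hadamard means $C^*$ and $C$, noting that the corresponding factorwise products are exactly $A_j^* A_j = S_j$. This gives $C^*C \le S_1^{(1/m)}\circ\cdots\circ S_m^{(1/m)}$, hence $\|C\|^2 = \rho(C^*C) \le \rho(S_1^{(1/m)}\circ\cdots\circ S_m^{(1/m)})$ by monotonicity of the spectral radius on positive operators, which is the first inequality of (\ref{goodref}).

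For the middle inequality of (\ref{goodref}), I would apply Corollary \ref{m-ji} (inequality (\ref{APA})) to the operators $S_1,\ldots,S_m$ in place of $A_1,\ldots,A_m$: this immediately yields
$\rho(S_1^{(1/m)}\circ\cdots\circ S_m^{(1/m)}) \le \rho(Q_1^{(1/m)}\circ\cdots\circ Q_m^{(1/m)})^{1/m} \le \rho(S_1 S_2\cdots S_m)^{1/m}$,
since $Q_j = S_j\cdots S_m S_1\cdots S_{j-1}$ are precisely the cyclic products $P_j$ for the family $\{S_j\}$. Raising everything to the power $\tfrac12$ gives all three remaining inequalities of (\ref{goodref}) at once. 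So the first chain is essentially: entrywise adjoint identity, then (\ref{basic2}) with $k=2$, then spectral-radius monotonicity, then Corollary \ref{m-ji}.

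For (\ref{norm_ref}), the argument is the same in its first half: with $D = A_1^{(\alpha_1)}\circ\cdots\circ A_m^{(\alpha_m)}$ one has $D^* = A_1^{*(\alpha_1)}\circ\cdots\circ A_m^{*(\alpha_m)}$, and applying (\ref{basic2}) with $k=2$ to the product $D^* D$ gives $D^*D \le S_1^{(\alpha_1)}\circ\cdots\circ S_m^{(\alpha_m)}$, whence $\|D\|^2 = \rho(D^*D) \le \rho(S_1^{(\alpha_1)}\circ\cdots\circ S_m^{(\alpha_m)})$ by (\ref{eqT}) and monotonicity — this is the first inequality of (\ref{norm_ref}). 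For the second inequality I would apply (\ref{gl1vecr}) of Theorem \ref{special_case} to $S_1,\ldots,S_m$, giving $\rho(S_1^{(\alpha_1)}\circ\cdots\circ S_m^{(\alpha_m)}) \le \rho(S_1)^{\alpha_1}\cdots\rho(S_m)^{\alpha_m} = \|A_1\|^{2\alpha_1}\cdots\|A_m\|^{2\alpha_m}$, where the last equality uses $\rho(S_j) = \rho(A_j^* A_j) = \|A_j\|^2$ from (\ref{eqT}); taking square roots finishes it.

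The step I expect to need the most care is the application of (\ref{basic2}) to $D^*D$ (and $C^*C$): one must check that the Hadamard weighted geometric mean $D^*$ has kernel $a_j(y,x)$-entries matched up in the right cyclic order so that the factorwise products appearing in (\ref{basic2}) are genuinely $A_j^* A_j$ and not some other pairing — i.e. that with $k=2$, $A_{1j} = A_j^*$ and $A_{2j} = A_j$, the product $A_{1j}A_{2j} = A_j^* A_j = S_j$. This is exactly the shape required by Theorem \ref{DPBfs}, so it goes through, but it is worth spelling out. A second minor point is justifying that $\rho$ is monotone on positive operators on a Banach function space with respect to the order $\le$; this is standard (it follows from positivity of the operators and the Gelfand formula together with monotonicity of the norm), and can be invoked directly. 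Everything else is bookkeeping with the exponents $\alpha_j$ and the identities (\ref{eqT}).
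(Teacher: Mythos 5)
Your proposal is correct and follows essentially the same route as the paper: the adjoint identity and inequality (\ref{basic2}) with $k=2$ give $C^*C \le S_1^{(\alpha_1)}\circ\cdots\circ S_m^{(\alpha_m)}$, after which (\ref{eqT}) and monotonicity of $\rho$ yield the first inequalities, while (\ref{APA}) and (\ref{gl1vecr}) with $\rho(S_j)=\|A_j\|^2$ supply the rest, exactly as in the paper's proof. The extra care you flag about the pairing in (\ref{basic2}) and about monotonicity of the spectral radius is reasonable but does not change the argument.
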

\begin{proof} First observe that
$$\left ( A_1 ^{( \alpha _1)} \circ \cdots \circ A_m ^{(\alpha _m)} \right)^* \left ( A_1 ^{( \alpha _1)} \circ \cdots \circ A_m ^{(\alpha _m)} \right)$$
$$=  \left ( (A_1 ^*)  ^{( \alpha _1)} \circ \cdots \circ (A_m ^*)^{(\alpha _m)} \right)\left ( A_1 ^{( \alpha _1)}  \circ \cdots \circ A_m ^{(\alpha _m)} \right) \le   ( A_1 ^* A_1 )  ^{( \alpha _1)} \circ \cdots \circ
 (A_m ^* A_m )^{(\alpha _m)} $$
\be
=S_1 ^{( \alpha _1)} \circ S_2 ^{(\alpha _2)} \circ \cdots \circ S_m ^{(\alpha _m)}
\label{AA*}
\ee
by (\ref{basic2}).
The first inequalities in (\ref{goodref}) and (\ref{norm_ref}) follow from (\ref{AA*}), the monotonicity of spectral radius and (\ref{eqT}). The remaining inequalities in (\ref{goodref}) follow from (\ref{APA}).

The remaining inequalities in (\ref{norm_ref}) follow from (\ref{gl1vecr}) and  (\ref{eqT}):
$$\|A_1 ^{( \alpha _1)} \circ \cdots \circ A_m ^{(\alpha _m)} \| \le \rho (S_1 ^{( \alpha _1)}  \circ \cdots \circ S_m ^{(\alpha _m)})^{\frac{1}{2}}$$
 $$\le \left( \rho(S_1)^{ \alpha _1} \cdots \rho(S_m)^{\alpha _m}\right)^{\frac{1}{2}}  =  \|A_1\|^{ \alpha _1}  \cdots \|A_m\|^{\alpha _m}$$
\end{proof}
In the case of non-negative matrices that define operators on  $l^2(R)$ the following generalization of (\ref{norm_ref}) holds.
\begin{theorem} Let $A_1, \ldots , A_m$ be
non-negative matrices that define operators on $l^2(R)$ and let  $\alpha _1$, $\alpha _2$,..., $\alpha _m$ be positive numbers 
such that $\sum_{j=1}^m \alpha _j \ge 1$. If $S_j =A_j ^T A_j$ for $j=1,\ldots , m$, then (\ref{norm_ref}) holds.
\label{nice_l2}
\end{theorem}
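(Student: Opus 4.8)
The plan is to transcribe the proof of Theorem \ref{nice} almost verbatim, replacing every appeal to the equality-case results (valid for $\sum_j \alpha_j = 1$) by the sequence-space results Theorem \ref{DP} and Corollary \ref{dp}, which are stated precisely under the weaker hypothesis $\sum_j \alpha_j \ge 1$. Throughout, $A^*$ means the Hilbert space adjoint of an operator on $l^2(R)$; since each $A_j$ has real non-negative entries and defines a bounded operator on $l^2(R)$, its adjoint is bounded and coincides as a matrix with the transpose $A_j^T$, so $S_j = A_j^T A_j = A_j^* A_j$ and the identities (\ref{eqT}) are available.

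Write $C := A_1^{(\alpha_1)} \circ \cdots \circ A_m^{(\alpha_m)}$, which by Corollary \ref{dp} defines a positive, hence bounded, operator on $l^2(R)$. An entrywise check shows $C^T = (A_1^T)^{(\alpha_1)} \circ \cdots \circ (A_m^T)^{(\alpha_m)}$ and that this matrix represents the Hilbert space adjoint $C^*$ of $C$, so $C^*$ is bounded. Apply Theorem \ref{DP} with $k=2$ to the array whose first row is $A_1^T,\ldots,A_m^T$ and whose second row is $A_1,\ldots,A_m$, with the given weights: then $C^* C = \bigl((A_1^T)^{(\alpha_1)} \circ \cdots \circ (A_m^T)^{(\alpha_m)}\bigr)\bigl(A_1^{(\alpha_1)} \circ \cdots \circ A_m^{(\alpha_m)}\bigr)$ defines an operator on $l^2(R)$ and, by (\ref{basic2}), $C^* C \le (A_1^T A_1)^{(\alpha_1)} \circ \cdots \circ (A_m^T A_m)^{(\alpha_m)} = S_1^{(\alpha_1)} \circ \cdots \circ S_m^{(\alpha_m)}$. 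Both sides are positive operators; by monotonicity of the spectral radius together with (\ref{eqT}) one gets $\|C\|^2 = \rho(C^* C) \le \rho\bigl(S_1^{(\alpha_1)} \circ \cdots \circ S_m^{(\alpha_m)}\bigr)$, which is the first inequality of (\ref{norm_ref}).

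For the second inequality, apply Corollary \ref{dp} to $S_1,\ldots,S_m$ (again $\sum_j \alpha_j \ge 1$): the operator $S_1^{(\alpha_1)} \circ \cdots \circ S_m^{(\alpha_m)}$ is positive on $l^2(R)$ and (\ref{gl1vecr}) gives $\rho\bigl(S_1^{(\alpha_1)} \circ \cdots \circ S_m^{(\alpha_m)}\bigr) \le \rho(S_1)^{\alpha_1} \cdots \rho(S_m)^{\alpha_m}$. Since $\rho(S_j) = \rho(A_j^T A_j) = \|A_j\|^2$ by (\ref{eqT}), taking square roots yields $\rho\bigl(S_1^{(\alpha_1)} \circ \cdots \circ S_m^{(\alpha_m)}\bigr)^{1/2} \le \|A_1\|^{\alpha_1} \cdots \|A_m\|^{\alpha_m}$, completing the proof.

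The only step that needs genuine care, as opposed to a routine copy of the argument for Theorem \ref{nice}, is the bookkeeping of well-definedness: with merely $\sum_j \alpha_j \ge 1$ the weighted arithmetic–geometric mean estimate is unavailable, so the fact that $C$, $C^T$, $C^* C$ and $S_1^{(\alpha_1)} \circ \cdots \circ S_m^{(\alpha_m)}$ all define bounded operators on $l^2(R)$ must be drawn entirely from Theorem \ref{DP} and Corollary \ref{dp} (which require $L \in \mathcal{L}$, satisfied by $l^2(R)$). The secondary point is the identification of the matrix transpose with the Hilbert space adjoint, which is what makes (\ref{eqT}) applicable here.
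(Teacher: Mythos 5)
Your proposal is correct and follows exactly the route the paper takes: its entire proof of Theorem \ref{nice_l2} is the one-line instruction to rerun the proof of Theorem \ref{nice} with Theorem \ref{DP} (and its corollary) in place of Theorem \ref{refinement}, which is precisely what you have written out in detail, including the identification $A^T = A^*$ on $l^2(R)$ needed for (\ref{eqT}) and the well-definedness bookkeeping under the weaker hypothesis $\sum_j \alpha_j \ge 1$.
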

\begin{proof} The result follows by applying Theorem \ref{DP} instead of Theorem \ref{refinement} in the proof of Theorem \ref{nice}.
\end{proof}
The following result is a direct consequence of previous two theorems in the case of two operators.
\begin{corollary} Let $A$ and $B$ be positive kernel operators on $L^2(X, \mu)$. Then 
$$ \|A^{( \frac{1}{2})} \circ B^{(\frac{1}{2})} \| \le \rho ((A^*A)^{( \frac{1}{2})} \circ (B^*B)^{(\frac{1}{2})})^{\frac{1}{2}}$$
\be
\le \rho \left((A^*AB^*B)^{( \frac{1}{2})} \circ (B^*BA^*A) ^{(\frac{1}{2})}\right)^{\frac{1}{4}} \le \rho (A^*AB^*B )^{\frac{1}{4}}=\|AB^*\|^{\frac{1}{2}}\le \|A\|^{\frac{1}{2}}  \|B\|^{\frac{1}{2}} 
\label{goodrefAB}
\ee
and
\be
\|A^{(\alpha)} \circ  B^{(1-\alpha)} \| \le \rho ((A^*A)^{( \alpha)}  \circ  (B^*B)^{(1-\alpha)})^{\frac{1}{2}}
 \le  \|A\|^{ \alpha}  \|B\|^{1-\alpha},
\label{norm_refAB}
\ee
 if $\alpha \in [0,1]$.

If, in addition, $L=l^2(R)$ (and so $A$ and $B$ may be considered as non-negative matrices that define operators on $l^2(R)$), then
\be
\|A^{(\alpha)} \circ  B^{(\beta)} \| \le \rho ((A^*A)^{( \alpha)}  \circ  (B^*B)^{(\beta)})^{\frac{1}{2}}
 \le  \|A\|^{ \alpha}  \|B\|^{\beta},
\label{refAB_2}
\ee
whenever $\alpha, \beta >0$ such that $\alpha + \beta \ge 1$.
\end{corollary}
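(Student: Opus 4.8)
The plan is to read off all three displays of the corollary as the $m=2$ instances of Theorems \ref{nice} and \ref{nice_l2}, and then to supply one short spectral-radius identity. Throughout I would take $A_1=A$, $A_2=B$, so that $S_1=A^*A$, $S_2=B^*B$, and (for $m=2$) $Q_1=S_1S_2=A^*AB^*B$, $Q_2=S_2S_1=B^*BA^*A$.

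First I would derive (\ref{goodrefAB}). Specializing (\ref{goodref}) to $m=2$ (so the weights $\tfrac1m$ become $\tfrac12$ and the exponent $\tfrac1{2m}$ becomes $\tfrac14$) gives immediately
$$\|A^{(1/2)}\circ B^{(1/2)}\|\le\rho\big((A^*A)^{(1/2)}\circ(B^*B)^{(1/2)}\big)^{1/2}\le\rho\big((A^*AB^*B)^{(1/2)}\circ(B^*BA^*A)^{(1/2)}\big)^{1/4}\le\rho(A^*AB^*B)^{1/4},$$
which is the first part of the chain in (\ref{goodrefAB}). It then remains to justify the tail $\rho(A^*AB^*B)^{1/4}=\|AB^*\|^{1/2}\le\|A\|^{1/2}\|B\|^{1/2}$. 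For the equality I would use $\rho(XY)=\rho(YX)$ to write $\rho(A^*AB^*B)=\rho(AB^*\cdot BA^*)$, note that $BA^*=(AB^*)^*$ so that $AB^*\cdot BA^*=(AB^*)(AB^*)^*$, and conclude from (\ref{eqT}) that this equals $\|AB^*\|^2$; taking fourth roots yields the equality, and the remaining inequality is submultiplicativity of the operator norm together with $\|B^*\|=\|B\|$.

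Next, (\ref{norm_refAB}) for $\alpha\in(0,1)$ is exactly (\ref{norm_ref}) with $m=2$, $\alpha_1=\alpha$, $\alpha_2=1-\alpha$, which are positive and sum to $1$ as Theorem \ref{nice} requires; the endpoint values $\alpha\in\{0,1\}$ I would dispatch directly, since with the convention $0^0=1$ the mean $A^{(\alpha)}\circ B^{(1-\alpha)}$ reduces to $B$ or $A$ and likewise $(A^*A)^{(\alpha)}\circ(B^*B)^{(1-\alpha)}$ reduces to $B^*B$ or $A^*A$, so all three terms in (\ref{norm_refAB}) equal $\|B\|$ or $\|A\|$ by (\ref{eqT}). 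Finally, (\ref{refAB_2}) is the $m=2$ case of Theorem \ref{nice_l2} with $\alpha_1=\alpha$, $\alpha_2=\beta$, whose hypothesis only demands $\alpha+\beta\ge1$; here $A^*=A^T$ since the matrices are non-negative, so the two formulations coincide.

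I do not anticipate a genuine obstacle: the statement really is a corollary, and the only step that is not pure specialization is the identity $\rho(A^*AB^*B)=\|AB^*\|^2$. The one point deserving care is applying the cyclic-permutation rule to the correct pair, so that $(AB^*)(AB^*)^*$ — rather than, say, $(A^*B)(A^*B)^*$ — is what appears before invoking (\ref{eqT}); everything else is bookkeeping on top of Theorems \ref{nice} and \ref{nice_l2}.
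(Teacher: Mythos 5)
Your proposal is correct and follows essentially the same route as the paper: the first three inequalities of (\ref{goodrefAB}), together with (\ref{norm_refAB}) and (\ref{refAB_2}), are read off as the $m=2$ cases of Theorems \ref{nice} and \ref{nice_l2}, and the tail is completed via $\rho(A^*AB^*B)=\rho(AB^*BA^*)=\rho\bigl(AB^*(AB^*)^*\bigr)=\|AB^*\|^2$ using (\ref{eqT}). Your explicit treatment of the endpoints $\alpha\in\{0,1\}$ is a small extra care the paper leaves implicit, but it does not change the argument.
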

\begin{proof} Inequalities (\ref{norm_refAB}), (\ref{refAB_2}) and the first three inequalities in (\ref{goodrefAB})  are special cases of Theorems \ref{nice} and \ref{nice_l2}.
To complete the proof (\ref{goodrefAB}) observe that,
$$\rho (A^*AB^*B )= \rho (AB^* BA^* )= \rho (AB^*(AB^* ) ^*) =\|AB^*\|^2\le \|A\|^2  \|B\|^2,$$ 
where the third equality follows from (\ref{eqT}). 
\end{proof}
If $A$ and $B$ are non-negative matrices that define operators on $l^2(R)$, then the inequalities (\ref{ATB})
were proved in \cite[Corollary 3.10]{DP16} and previously in the finite dimensional case in \cite[Corollary 2.3]{CZ15} and \cite[Corollary 6]{Hu11}. Consequently, we have 
$$\|A\circ B\| \le  \rho  ^{\frac{1}{2}} ((A^T B )\circ (B ^T A)) \le \rho (A^TB)=\rho (AB^T) \le \|AB^T\|\le \|A\|\|B\|.$$
The following result proves a version of this result for positive kernel operators and generalizes it even in the case of non-negative matrices that define operators on $l^2(R)$.
\begin{theorem} Let $A$ and $B$ be positive kernel operators on $L^2(X, \mu)$. Then 
\be 
\|A^{(\frac{1}{2})} \circ  B^{(\frac{1}{2})} \| \le \rho ^{\frac{1}{2}} \left ( (A^* B ) ^{(\frac{1}{2})}\circ (B ^* A)^{(\frac{1}{2})}\right) \le \rho ^{\frac{1}{2}} (A^* B ).  
\label{kernel*}
\ee

If, in addition, $L=l^2(R)$, then
\be
\|A^{(\alpha)} \circ  B^{(\alpha)} \| \le \rho ^{\frac{1}{2}} \left ( (A^T B ) ^{(\alpha)}\circ (B ^T A)^{(\alpha)} \right) \le \rho ^{\alpha} (A^T B ),  
\label{matrixT}
\ee
whenever $\alpha \ge \frac{1}{2}$. 
\label{ok2}
\end{theorem}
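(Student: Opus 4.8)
The plan is to follow the pattern of the proof of Theorem~\ref{nice}: for $C=A^{(1/2)}\circ B^{(1/2)}$ use the Hilbert-space identity $\|C\|^2=\rho(C^*C)$ from (\ref{eqT}), dominate $C^*C$ by a Hadamard geometric mean via (\ref{basic2}), and then bound the spectral radius of that mean by (\ref{gl1vecr}). The only twist compared with the earlier Corollary (which produced $(A^*A)^{(1/2)}\circ(B^*B)^{(1/2)}$) is that here we must make the \emph{cross} products $A^*B$ and $B^*A$ appear, and for that we use the commutativity of the Hadamard product before invoking (\ref{basic2}).

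Concretely, for (\ref{kernel*}): set $C=A^{(1/2)}\circ B^{(1/2)}$, a positive kernel operator defined on all of $L^2(X,\mu)$, whose adjoint is $C^*=(A^*)^{(1/2)}\circ(B^*)^{(1/2)}$ (compare the kernels of the two sides at $(x,y)$ and $(y,x)$). Writing $C=B^{(1/2)}\circ A^{(1/2)}$, we have $C^*C=\bigl((A^*)^{(1/2)}\circ(B^*)^{(1/2)}\bigr)\bigl(B^{(1/2)}\circ A^{(1/2)}\bigr)$, and I would apply (\ref{basic2}) with $k=m=2$, $\alpha_1=\alpha_2=\tfrac12$, $A_{11}=A^*$, $A_{12}=B^*$, $A_{21}=B$, $A_{22}=A$ to get $C^*C\le (A^*B)^{(1/2)}\circ (B^*A)^{(1/2)}$. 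By (\ref{eqT}) and monotonicity of the spectral radius, $\|C\|^2=\rho(C^*C)\le\rho\bigl((A^*B)^{(1/2)}\circ(B^*A)^{(1/2)}\bigr)$, which is the first inequality of (\ref{kernel*}). For the second, (\ref{gl1vecr}) with $m=2$, $\alpha_1=\alpha_2=\tfrac12$ gives $\rho\bigl((A^*B)^{(1/2)}\circ(B^*A)^{(1/2)}\bigr)\le\rho(A^*B)^{1/2}\rho(B^*A)^{1/2}$, and since $(A^*B)^*=B^*A$ and $\rho(T^*)=\rho(T)$ this equals $\rho(A^*B)$; taking square roots finishes (\ref{kernel*}).

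For (\ref{matrixT}) I would run the identical argument with $A^*=A^T$, the only difference being that now the weights $\alpha_1=\alpha_2=\alpha$ sum to $2\alpha$ rather than to $1$. The hypothesis $\alpha\ge\tfrac12$ is exactly $2\alpha\ge1$, so I would replace (\ref{basic2}) by Theorem~\ref{DP} (which simultaneously guarantees that $C=A^{(\alpha)}\circ B^{(\alpha)}$ and $(A^TB)^{(\alpha)}\circ(B^TA)^{(\alpha)}$ define operators on $l^2(R)$ and yields $C^TC\le(A^TB)^{(\alpha)}\circ(B^TA)^{(\alpha)}$) and replace (\ref{gl1vecr}) by Corollary~\ref{dp} (which gives $\rho\bigl((A^TB)^{(\alpha)}\circ(B^TA)^{(\alpha)}\bigr)\le\rho(A^TB)^{\alpha}\rho(B^TA)^{\alpha}=\rho(A^TB)^{2\alpha}$). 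Combining with $\|C\|^2=\rho(C^TC)$ and taking square roots gives (\ref{matrixT}).

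The step I expect to require the most care is the bookkeeping in the application of (\ref{basic2})/Theorem~\ref{DP}: one must place $A^*,B^*,B,A$ into the four slots $A_{11},A_{12},A_{21},A_{22}$ in precisely the order that makes the two columnwise products equal $A^*B$ and $B^*A$, and then recognize the resulting left-hand side $\bigl((A^*)^{(1/2)}\circ(B^*)^{(1/2)}\bigr)\bigl(B^{(1/2)}\circ A^{(1/2)}\bigr)$ as $C^*C$ --- which is legitimate only because $B^{(1/2)}\circ A^{(1/2)}=A^{(1/2)}\circ B^{(1/2)}$. Everything else (the identities in (\ref{eqT}), monotonicity of $\rho$, and $\rho(T^*)=\rho(T)$) is standard and already used in the paper.
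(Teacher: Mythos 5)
Your proposal is correct and follows essentially the same route as the paper: the paper likewise writes $\|A^{(\frac12)}\circ B^{(\frac12)}\|^2=\rho\bigl(((A^*)^{(\frac12)}\circ(B^*)^{(\frac12)})(B^{(\frac12)}\circ A^{(\frac12)})\bigr)$, exploiting commutativity of the Hadamard product to make the cross products $A^*B$ and $B^*A$ appear, then applies (\ref{basic2}) together with (\ref{spectral2_ref}) (i.e.\ Theorem~\ref{refinement}) and finishes with $\rho(B^*A)=\rho((B^*A)^*)=\rho(A^*B)$. Your treatment of (\ref{matrixT}) via Theorem~\ref{DP} for weights summing to $2\alpha\ge 1$ is exactly the paper's stated adaptation as well.
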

\begin{proof} First we prove (\ref{kernel*}). By (\ref{eqT}) and Theorem \ref{refinement} we have
$$\|A^{(\frac{1}{2})} \circ  B^{(\frac{1}{2})} \|^2 = \rho \left ( \left( (A^*)^{(\frac{1}{2})} \circ ( B^*)^{(\frac{1}{2})}   \right)  \left(B^{(\frac{1}{2})} \circ  A^{(\frac{1}{2})}\right)\right)
 \le \rho \left ( (A^* B ) ^{(\frac{1}{2})}\circ (B ^* A)^{(\frac{1}{2})}\right) $$
$$ \le \rho ^{\frac{1}{2}} \left ( A^* B \right) \rho ^{\frac{1}{2}} \left (B ^* A \right) = \rho  (A^* B ),$$
where the last equality follows from $\rho \left (B ^* A \right)= \rho \left ((B ^* A)^* \right)= \rho \left ( A^* B \right) $. This proves (\ref{kernel*}).

The inequalities  (\ref{matrixT}) are proved in a similar way by applying Theorem \ref{DP} instead of  Theorem \ref{refinement}.
\end{proof}
\begin{remark} If $A$ and $B$ are positive kernel operators on $L^2(X, \mu)$, then (\ref{kernel*}) implies
$$\|A^{(\frac{1}{2})} \circ  B^{(\frac{1}{2})} \| \le \rho ^{\frac{1}{2}} \left ( (A^* B ) ^{(\frac{1}{2})}\circ (B ^* A)^{(\frac{1}{2})}\right) \le \rho ^{\frac{1}{2}} (A^* B ) =\rho ^{\frac{1}{2}} \left (AB ^* \right)  $$
$$\le \|AB ^* \| ^{\frac{1}{2}} \le  \|A\| ^{\frac{1}{2}} \|B \| ^{\frac{1}{2}},$$
which counterparts the inequalities (\ref{goodrefAB}).
\end{remark}
The following two theorems generalize Theorem \ref{ok2} to several operators. Theorem \ref{kernel_alot} bellow is a version of \cite[Theorem 3.9]{DP16} and \cite[Theorem 5]{Hu11} for kernel operators, while Theorem \ref{matrix_alot} generalizes these results even in the case of non-negative matrices. 
\begin{theorem} 
 Let $A_1, \ldots , A_m$ be positive kernel operators on $L^2(X, \mu)$.

If $m$ is even, then
$$ \|A_1^{\left(\frac{1}{m}\right)} \circ A_2^{\left(\frac{1}{m}\right)} \circ \cdots \circ A_m^{\left(\frac{1}{m}\right)} \| \le 
  \left (\rho (A_1 ^*A_2A_3 ^*A_4 \cdots A_{m-1} ^* A_m   ) \rho (A_1A_2 ^*A_3A_4 ^* \cdots A_{m-1} A_m ^*  )\right) ^{\frac{1}{2m}}$$
\be
= \left(\rho (A_1 ^*A_2A_3 ^*A_4 \cdots A_{m-1} ^* A_m   )\rho (A_mA_{m-1} ^* \cdots A_{4} A_3 ^* A_2A_1 ^* )\right)^{\frac{1}{2m}}.
\label{th5sod_notref}
\ee
If $m$ is odd, then
$$\|A_1^{\left(\frac{1}{m}\right)} \circ A_2^{\left(\frac{1}{m}\right)} \circ \cdots \circ A_m^{\left(\frac{1}{m}\right)} \| $$
\be
  \le  \rho (A_1 A_2 ^*A_3A_4 ^* \cdots A_{m-2} A_{m-1} ^* A_m A_1 ^* A_2 A_3 ^*A_4 \cdots A_{m-2}^* A_{m-1} A_m ^*  )^{\frac{1}{2m}}
\label{th5lih_notref}
\ee
\label{kernel_alot}
\end{theorem}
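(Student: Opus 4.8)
The plan is to estimate $\|C\|$, where $C:=A_1^{(1/m)}\circ\cdots\circ A_m^{(1/m)}$, by combining the Hilbert‑space identity $\|C\|^2=\rho(C^*C)$ from (\ref{eqT}) with the basic domination (\ref{basic2}) of Theorem \ref{DPBfs}, monotonicity of $\rho$ for positive operators, and the ``product'' estimates of Corollary \ref{m-ji} and Theorem \ref{special_case}; the parity of $m$ will enter only through the way the kernel operators that arise are rearranged. Since the kernel of $C$ is $\prod_{j}a_j(x,y)^{1/m}$, we have $C^*=(A_1^*)^{(1/m)}\circ\cdots\circ(A_m^*)^{(1/m)}$, and, $\circ$ being commutative, for any permutations $\tau,\upsilon$ of $\{1,\ldots,m\}$ we may write $C^*$ with $j$-th Hadamard factor $(A_{\tau(j)}^*)^{(1/m)}$ and $C$ with $j$-th factor $A_{\upsilon(j)}^{(1/m)}$; then (\ref{basic2}) gives
$$C^*C\ \le\ (A_{\tau(1)}^*A_{\upsilon(1)})^{(1/m)}\circ(A_{\tau(2)}^*A_{\upsilon(2)})^{(1/m)}\circ\cdots\circ(A_{\tau(m)}^*A_{\upsilon(m)})^{(1/m)}.$$

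For the odd case, note that when $m$ is odd, $2$ is invertible modulo $m$, so there are permutations $\tau,\upsilon$ (for instance $\tau(k)\equiv 2k$ and $\upsilon(k)\equiv 2k+1\pmod m$, with representatives taken in $\{1,\ldots,m\}$) for which the string $A_{\tau(1)}^*A_{\upsilon(1)}A_{\tau(2)}^*A_{\upsilon(2)}\cdots A_{\tau(m)}^*A_{\upsilon(m)}$ is a cyclic rotation of the $2m$-factor alternating product in (\ref{th5lih_notref}). Applying monotonicity of $\rho$ to the displayed bound and then (\ref{APA}) of Corollary \ref{m-ji} to the Hadamard geometric mean of the $m$ positive kernel operators $A_{\tau(j)}^*A_{\upsilon(j)}$, we obtain $\rho(C^*C)\le\rho\big(A_{\tau(1)}^*A_{\upsilon(1)}\cdots A_{\tau(m)}^*A_{\upsilon(m)}\big)^{1/m}$; since $\rho(XY)=\rho(YX)$, the operator inside has the same spectral radius as the product in (\ref{th5lih_notref}), and a square root finishes this case.

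For the even case write $m=2\ell$. Here the rotation trick is unavailable, and the device is the elementary kernel identity $(A_i^*)^{(1/m)}\circ(A_j^*)^{(1/m)}=\big((A_i^*)^{(1/2)}\circ(A_j^*)^{(1/2)}\big)^{(1/\ell)}$ (and its analogue without adjoints), which lets us regroup $C^*$ and $C$ as Hadamard geometric means of only $\ell$ factors: with $F_1:=(A_1^*)^{(1/2)}\circ(A_m^*)^{(1/2)}$, $F_k:=(A_{2k-1}^*)^{(1/2)}\circ(A_{2k-2}^*)^{(1/2)}$ for $2\le k\le\ell$, and $G_k:=A_{2k}^{(1/2)}\circ A_{2k-1}^{(1/2)}$ for $1\le k\le\ell$, one checks that $C^*=F_1^{(1/\ell)}\circ\cdots\circ F_\ell^{(1/\ell)}$ and $C=G_1^{(1/\ell)}\circ\cdots\circ G_\ell^{(1/\ell)}$. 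Then (\ref{basic2}) gives $C^*C\le(F_1G_1)^{(1/\ell)}\circ\cdots\circ(F_\ell G_\ell)^{(1/\ell)}$, and monotonicity of $\rho$ together with (\ref{APA}) applied to the $\ell$ operators $F_kG_k$ gives $\rho(C^*C)\le\rho\big((F_1G_1)(F_2G_2)\cdots(F_\ell G_\ell)\big)^{1/\ell}$. Now $(F_1G_1)\cdots(F_\ell G_\ell)$ is a product of $2\ell$ Hadamard geometric means with weights $1/2,1/2$, so a further application of (\ref{basic2}) (with two ``columns'') followed by (\ref{gl1vecr}) yields $\rho\big((F_1G_1)\cdots(F_\ell G_\ell)\big)\le\rho(U)^{1/2}\rho(V')^{1/2}$, where — sending the first Hadamard slot of each $F_k,G_k$ to the first column and the second slot to the second column — the first column reads off as $U:=A_1^*A_2A_3^*A_4\cdots A_{m-1}^*A_m$ and the second as $V':=A_m^*A_1A_2^*A_3\cdots A_{m-2}^*A_{m-1}$, which is a cyclic rotation of $V:=A_1A_2^*A_3A_4^*\cdots A_{m-1}A_m^*$, so $\rho(V')=\rho(V)$. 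Combining, $\|C\|^2=\rho(C^*C)\le(\rho(U)\rho(V))^{1/m}$, i.e.\ the first inequality of (\ref{th5sod_notref}); the asserted equality is simply $\rho(V)=\rho(V^*)$, since $V^*=A_mA_{m-1}^*\cdots A_2A_1^*$.

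The substance of the argument lies in the combinatorics of the even case: one must choose the pairs entering $F_k,G_k$ and the column assignment so that, after the two successive uses of (\ref{basic2}), the two $m$-factor strings that emerge are precisely $U$ and a cyclic rotation of $V$. A single squaring, as used when $m$ is odd, does not suffice here, since it produces only $\rho$ of one $2m$-factor product, which in general cannot be dominated by $\rho(U)\rho(V)$ (the spectral radius is not submultiplicative, even for positive operators); extracting the factor $\ell$ first, by invoking Corollary \ref{m-ji} with $\ell$ operators, is exactly what splits the estimate into the two pieces $U$ and $V$.
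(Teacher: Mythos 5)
Your argument is correct; every step is an application of tools the paper itself provides ((\ref{eqT}), (\ref{basic2}), monotonicity of $\rho$, (\ref{gl1vecr}) and Corollary \ref{m-ji}), and I checked the two combinatorial claims that carry the load: for odd $m$ the string $A_{\tau(1)}^*A_{\upsilon(1)}\cdots A_{\tau(m)}^*A_{\upsilon(m)}$ with $\tau(k)\equiv 2k$, $\upsilon(k)\equiv 2k+1 \pmod m$ is indeed the product in (\ref{th5lih_notref}) shifted by one position, and for even $m$ the two column products of the block decomposition are exactly $U=A_1^*A_2\cdots A_{m-1}^*A_m$ and $V'=A_m^*A_1A_2^*A_3\cdots A_{m-2}^*A_{m-1}$, a rotation of $V=A_1A_2^*\cdots A_{m-1}A_m^*$. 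The organization differs from the paper's. The paper works with $(C^*C)^{m/2}$ (even) respectively $(C^*C)^{m}$ (odd), written as an alternating product of $m$ (resp.\ $2m$) cyclically re-indexed copies of $C^*$ and $C$, and applies Theorem \ref{DPBfs} in one shot with that many rows and $m$ columns; each column is then a cyclic rotation of $U$ or of $V$ (even case), or of the single $2m$-factor string (odd case). You instead apply (\ref{basic2}) only to the two-row product $C^*C$ and delegate the iteration to Corollary \ref{m-ji}. For odd $m$ this is essentially a repackaging of the same computation, made possible because $2$ is invertible mod $m$ so a single interleaving permutation suffices. For even $m$ your device is genuinely different: regrouping the $m$ Hadamard factors of $C^*$ and $C$ into $m/2$ two-factor blocks $F_k$, $G_k$ via the identity $X^{(1/m)}\circ Y^{(1/m)}=(X^{(1/2)}\circ Y^{(1/2)})^{(1/\ell)}$, then using (\ref{basic2}) a second time with two columns to split the estimate into $\rho(U)^{1/2}\rho(V)^{1/2}$. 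This buys a transparent explanation of why the even bound factors into two spectral radii while the odd bound does not, at the cost of an extra layer of bookkeeping; the paper's single-pass version is more uniform across the two parities. Your closing remark that a naive single squaring cannot yield $\rho(U)\rho(V)$ (since $\rho$ is not submultiplicative) is a fair justification for the extra step.
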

\begin{proof} If $m$ is even, we have by (\ref{basic2})
$$\left(\left(A_1^{\left(\frac{1}{m}\right)} \circ A_2^{\left(\frac{1}{m}\right)} \circ \cdots \circ A_m^{\left(\frac{1}{m}\right)}\right)^*\left(A_1^{\left(\frac{1}{m}\right)} \circ A_2^{\left(\frac{1}{m}\right)} \circ \cdots \circ A_m^{\left(\frac{1}{m}\right)}\right)\right)^\frac{m}{2}$$
$$=\left((A_1 ^*)^{\left(\frac{1}{m}\right)} \circ (A_2 ^*)^{\left(\frac{1}{m}\right)} \circ \cdots \circ (A_m ^*)^{\left(\frac{1}{m}\right)}\right)  \left(A_2 ^{\left(\frac{1}{m}\right)} \circ  \cdots \circ A_m ^{\left(\frac{1}{m}\right)}\circ A_1^{\left(\frac{1}{m}\right)}\right)$$
$$\left((A_3 ^*)^{\left(\frac{1}{m}\right)} \circ \cdots \circ (A_m ^*)^{\left(\frac{1}{m}\right)}\circ (A_1 ^*)^{\left(\frac{1}{m}\right)} \circ (A_2^*)^{\left(\frac{1}{m}\right)} \right)
\left(A_4 ^{\left(\frac{1}{m}\right)}\circ \cdots \circ A_m ^{\left(\frac{1}{m}\right)}\circ A_1 ^{\left(\frac{1}{m}\right)}\circ A_2 ^{\left(\frac{1}{m}\right)}\circ A_3 ^{\left(\frac{1}{m}\right)}\right)$$ 
$$ \cdots \left((A_{m-1 } ^*)^{\left(\frac{1}{m}\right)}\circ (A_m ^*)^{\left(\frac{1}{m}\right)} \circ (A_{1} ^*)^{\left(\frac{1}{m}\right)} \circ \cdots \circ (A_{m-2}^*)^{\left(\frac{1}{m}\right)} \right)
\left(A_m ^{\left(\frac{1}{m}\right)} \circ A_1 ^{\left(\frac{1}{m}\right)} \circ \cdots \circ A_{m-1} ^{\left(\frac{1}{m}\right)}\right)$$
$$\le B:= (A_1 ^* A_2 A_3 ^* A_4\cdots A_{m-1} ^* A_m)^{\left(\frac{1}{m}\right)}\circ (A_2 ^* A_3 A_4 ^* A_5\cdots A_{m} ^* A_1)^{\left(\frac{1}{m}\right)}\circ \cdots $$
\be
\circ (A_{m-1} ^* A_m A_{1} ^* A_2 \cdots A_{m-3} ^* A_{m-2} )^{\left(\frac{1}{m}\right)} \circ(A_{m} ^* A_1A_{2} ^* A_3 \cdots A_{m-2} ^* A_{m-1} )^{\left(\frac{1}{m}\right)} . 
\label{defB}
\ee
It follows by (\ref{spectral2}) that
$$\|A_1^{\left(\frac{1}{m}\right)} \circ A_2^{\left(\frac{1}{m}\right)} \circ \cdots \circ A_m^{\left(\frac{1}{m}\right)} \|^{m}=$$
 $$\rho \left(\left(A_1^{\left(\frac{1}{m}\right)} \circ A_2^{\left(\frac{1}{m}\right)} \circ \cdots \circ A_m^{\left(\frac{1}{m}\right)}\right)^*\left(A_1^{\left(\frac{1}{m}\right)} \circ A_2^{\left(\frac{1}{m}\right)} \circ \cdots \circ A_m^{\left(\frac{1}{m}\right)}\right)\right)^\frac{m}{2}  $$
$$  \le \rho (B) \le \rho (A_1 ^* A_2 A_3 ^* A_4\cdots A_{m-1} ^* A_m)^{\frac{1}{m}}\rho(A_2 ^* A_3 A_4 ^* A_5\cdots A_{m} ^* A_1)^{\frac{1}{m}}\cdots$$
$$\cdots \rho(A_{m-1} ^* A_m A_{1} ^* A_2 \cdots A_{m-3} ^* A_{m-2} )^{\frac{1}{m}}\rho(A_{m} ^* A_1A_{2} ^* A_3 \cdots A_{m-2} ^* A_{m-1} )^{\frac{1}{m}} $$
$$=\rho ^{\frac{1}{2}} (A_1 ^*A_2A_3 ^*A_4 \cdots A_{m-1} ^* A_m   )\rho ^{\frac{1}{2}} (A_1A_2 ^*A_3A_4 ^* \cdots A_{m-1} A_m ^*  ),$$
which proves (\ref{th5sod_notref}).

If $m$ is odd, we have by (\ref{basic2})
$$\left(\left(A_1^{\left(\frac{1}{m}\right)} \circ A_2^{\left(\frac{1}{m}\right)} \circ \cdots \circ A_m^{\left(\frac{1}{m}\right)}\right)^*\left(A_1^{\left(\frac{1}{m}\right)} \circ A_2^{\left(\frac{1}{m}\right)} \circ \cdots \circ A_m^{\left(\frac{1}{m}\right)}\right)\right)^m$$
$$=\left((A_1 ^*)^{\left(\frac{1}{m}\right)} \circ (A_2 ^*)^{\left(\frac{1}{m}\right)} \circ \cdots \circ (A_m ^*)^{\left(\frac{1}{m}\right)}\right)  \left(A_2 ^{\left(\frac{1}{m}\right)} \circ  \cdots \circ A_m ^{\left(\frac{1}{m}\right)}\circ A_1^{\left(\frac{1}{m}\right)}\right)$$
$$\left((A_3 ^*)^{\left(\frac{1}{m}\right)} \circ \cdots \circ (A_m ^*)^{\left(\frac{1}{m}\right)}\circ (A_1 ^*)^{\left(\frac{1}{m}\right)} \circ (A_2^*)^{\left(\frac{1}{m}\right)} \right)
\left(A_4 ^{\left(\frac{1}{m}\right)}\circ \cdots \circ A_m ^{\left(\frac{1}{m}\right)}\circ A_1 ^{\left(\frac{1}{m}\right)}\circ A_2 ^{\left(\frac{1}{m}\right)}\circ A_3 ^{\left(\frac{1}{m}\right)}\right)$$ 
$$\cdots \left(A_{m-1 }^{\left(\frac{1}{m}\right)}\circ A_m ^{\left(\frac{1}{m}\right)}\circ A_{1} ^{\left(\frac{1}{m}\right)} \circ \cdots \circ A_{m-2} ^{\left(\frac{1}{m}\right)}\right)
\left((A_m ^*)^{\left(\frac{1}{m}\right)} \circ (A_1 ^*)^{\left(\frac{1}{m}\right)}\circ \cdots \circ (A_{m-1}^*) ^{\left(\frac{1}{m}\right)}\right)$$
$$\left(A_1 ^{\left(\frac{1}{m}\right)}  \circ A_2 ^{\left(\frac{1}{m}\right)}  \circ \cdots \circ A_m ^{\left(\frac{1}{m}\right)}\right)\left((A_2 ^*)^{\left(\frac{1}{m}\right)} \circ  \cdots \circ (A_m^*)^{\left(\frac{1}{m}\right)} \circ (A_1^*)^{\left(\frac{1}{m}\right)}\right)$$
$$\left(A_3 ^{\left(\frac{1}{m}\right)} \circ A_4 ^{\left(\frac{1}{m}\right)}  \circ \cdots \circ A_m ^{\left(\frac{1}{m}\right)} \circ A_1 ^{\left(\frac{1}{m}\right)}  \circ A_2 ^{\left(\frac{1}{m}\right)} \right)\cdots$$
$$ \cdots \left((A_{m-1 }^*)^{\left(\frac{1}{m}\right)}\circ (A_m^*)^{\left(\frac{1}{m}\right)} \circ (A_{1}^*) ^{\left(\frac{1}{m}\right)} \circ \cdots \circ (A_{m-2}^* )^{\left(\frac{1}{m}\right)} \right)
\left (A_m^{\left(\frac{1}{m}\right)} \circ A_1 ^{\left(\frac{1}{m}\right)} \circ \cdots \circ A_{m-1} ^{\left(\frac{1}{m}\right)}\right) $$
$$ \le C:= (A_1 ^* A_2 A_3^*A_4  \cdots A_{m-1} A_m ^*A_1  A_2^* A_3 A_4^* \cdots A_{m-1}^* A_m)^{\left(\frac{1}{m}\right)} \circ $$
  $$(A_2 ^* A_3A_4^*  \cdots A_{m-1}^* A_m A_1^*  A_2 A_3^* A_4 \cdots A_{m-1} A_m ^* A_1)^{\left(\frac{1}{m}\right)} \circ \cdots$$
\be
\cdots \circ (A_m ^*A_1  A_2^* A_3 A_4^* \cdots A_{m-1}^* A_mA_1 ^* A_2 A_3^*A_4  \cdots A_{m-1}) ^{\left(\frac{1}{m}\right)}. 
\label{defC}
\ee
It follows by (\ref{spectral2}) that
$$\|A_1^{\left(\frac{1}{m}\right)} \circ A_2^{\left(\frac{1}{m}\right)} \circ \cdots \circ A_m^{\left(\frac{1}{m}\right)} \|^{2m}$$ 
$$ \le \rho (C) \le  \rho ^{\frac{m+1}{2m} }(A_1 ^* A_2 A_3^*A_4  \cdots A_{m-1} A_m ^*A_1  A_2^* A_3 A_4^* \cdots A_{m-1}^* A_m)\times$$
$$\rho ^{\frac{m-1}{2m} } (A_1 A_2 ^*A_3A_4 ^* \cdots A_{m-1} ^* A_mA_1 ^* A_2 A_3 ^*A_4 \cdots A_{m-1} A_m ^*  ) $$
$$= \rho  (A_1 A_2 ^*A_3A_4 ^* \cdots A_{m-1} ^* A_mA_1 ^* A_2 A_3 ^*A_4 \cdots A_{m-1} A_m ^*  ), $$
which completes the proof.
\end{proof}
\begin{remark} If $A_1, \ldots , A_m$ are positive kernel operators on $L^2(X, \mu)$, then the proof above yields the following refinements of (\ref{th5sod_notref}) and (\ref{th5lih_notref}).

\noindent If $m$ is even, then 
$$ \|A_1^{\left(\frac{1}{m}\right)} \circ A_2^{\left(\frac{1}{m}\right)} \circ \cdots \circ A_m^{\left(\frac{1}{m}\right)} \| \le \rho ^{\frac{1}{m}} (B)$$
\be
= \left(\rho (A_1 ^*A_2A_3 ^*A_4 \cdots A_{m-1} ^* A_m   )\rho (A_mA_{m-1} ^* \cdots A_{4} A_3 ^* A_2A_1 ^* )\right)^{\frac{1}{2m}}
\label{big_sod}
\ee
and, if  $m$ is odd, then
$$\|A_1^{\left(\frac{1}{m}\right)} \circ A_2^{\left(\frac{1}{m}\right)} \circ \cdots \circ A_m^{\left(\frac{1}{m}\right)} \|  \le  \rho ^{\frac{1}{2m}} (C)$$
\be
  \le  \rho (A_1 A_2 ^*A_3A_4 ^* \cdots A_{m-2} A_{m-1} ^* A_m A_1 ^* A_2 A_3 ^*A_4 \cdots A_{m-2}^* A_{m-1} A_m ^*  )^{\frac{1}{2m}},
\label{big_lih}
\ee
where the operators $B$ and $C$ are defined in (\ref{defB}) and (\ref{defC}), respectively.
\label{bolj_natancno}
\end{remark}
The following theorem generalizes \cite[Theorem 3.9]{DP16} and \cite[Theorem 5]{Hu11}. It can be proved in a similar way as Theorem \ref{kernel_alot} by applying Theorem \ref{DP} instead of  Theorem \ref{refinement} in the proof. We omit the details of the proof.
\begin{theorem}
 Let $A_1, \ldots , A_m$ be non-negative matrices that define operators on $l^2 (R)$ and let $\alpha \ge \frac{1}{m}$.

If $m$ is even, then
$$ \|A_1^{\left(\alpha \right)} \circ A_2^{\left( \alpha\right)} \circ \cdots \circ A_m^{\left( \alpha \right)} \| \le \rho  ^{\frac{1}{m}} (B_{\alpha})$$
\be
 \le \left(\rho (A_1 ^TA_2A_3 ^TA_4 \cdots A_{m-1} ^T A_m   )\rho (A_mA_{m-1} ^T \cdots A_{4} A_3 ^T A_2A_1 ^T )\right)^{\frac{\alpha}{2}},
\label{matrix_sod}
\ee
where
$$ B_{\alpha }= (A_1 ^T A_2 A_3 ^T A_4\cdots A_{m-1} ^T A_m)^{(\alpha )}\circ (A_2 ^T A_3 A_4 ^T A_5\cdots A_{m} ^T A_1)^{\left(\alpha \right)}\circ \cdots $$
$$\circ (A_{m-1} ^T A_m A_{1} ^T A_2 \cdots A_{m-3} ^T A_{m-2} )^{\left(\alpha \right)} \circ(A_{m} ^T A_1A_{2} ^T A_3 \cdots A_{m-2} ^T A_{m-1} )^{\left(\alpha \right)} . $$
If $m$ is odd, then
$$ \|A_1^{\left(\alpha \right)} \circ A_2^{\left( \alpha\right)} \circ \cdots \circ A_m^{\left( \alpha \right)} \| \le \rho  ^{\frac{1}{2m}} (C_{\alpha})$$
\be
  \le  \rho (A_1 A_2 ^TA_3A_4 ^T \cdots A_{m-2} A_{m-1} ^T A_m A_1 ^T A_2 A_3 ^TA_4 \cdots A_{m-2}^T A_{m-1} A_m ^T  )^{\frac{\alpha}{2}},
\label{matrix_lih}
\ee
\label{matrix_alot}
where 
$$  C_{\alpha} = (A_1 ^T A_2 A_3^TA_4  \cdots A_{m-1} A_m ^TA_1  A_2^T A_3 A_4^T \cdots A_{m-1}^T A_m)^{(\alpha )} \circ $$
$$(A_2 ^T A_3A_4^T  \cdots A_{m-1}^T A_m A_1^T  A_2 A_3^T A_4 \cdots A_{m-1} A_m ^T A_1)^{(\alpha )} \circ \cdots$$
$$\cdots \circ (A_m ^TA_1  A_2^T A_3 A_4^T \cdots A_{m-1}^T A_mA_1 ^T A_2 A_3^TA_4  \cdots A_{m-1})^{(\alpha )}. $$
\end{theorem}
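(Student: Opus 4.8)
The plan is to mimic the proof of Theorem \ref{kernel_alot} essentially verbatim, performing throughout the two substitutions $A_i^*\rightsquigarrow A_i^T$ and (Hadamard exponent) $\tfrac1m\rightsquigarrow\alpha$. The only point at which the argument must change is the constraint on the exponents: the $m$ Hadamard powers now carry exponent $\alpha$ with $m\alpha\ge 1$ rather than $=1$, so wherever the proof of Theorem \ref{kernel_alot} invokes the inequalities (\ref{basic2}) and (\ref{spectral2}) (available via Theorem \ref{refinement}) we instead invoke Theorem \ref{DP}, which is exactly the version of these inequalities valid for non-negative matrices on spaces in $\mathcal L$ under the weaker hypothesis $\sum_j\alpha_j\ge1$. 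Two preliminary remarks: first, $D:=A_1^{(\alpha)}\circ\cdots\circ A_m^{(\alpha)}$ does define a bounded operator on $l^2(R)$, by Corollary \ref{dp}; second, for any non-negative matrix $D$ defining an operator on $l^2(R)$ we have the real analogue of (\ref{eqT}), namely $\|D\|^2=\rho(D^TD)$, together with $\rho\big((D^TD)^k\big)=\rho(D^TD)^k$ for every $k\in\NN$, and these are the only ``Hilbert-space'' facts that enter.

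Suppose first that $m$ is even. Then $m/2\in\NN$, so $(D^TD)^{m/2}$ is an ordinary product of $m$ matrices, alternately $(A_1^T)^{(\alpha)}\circ\cdots\circ(A_m^T)^{(\alpha)}$ and $A_1^{(\alpha)}\circ\cdots\circ A_m^{(\alpha)}$, $m/2$ copies of each, each factor a Hadamard product of $m$ Hadamard $\alpha$-powers. Re-ordering the terms inside each Hadamard factor cyclically — the $i$-th factor written starting from index $i$ — exactly as in (\ref{defB}), and applying (\ref{basic2}) of Theorem \ref{DP} with $k=m$ factors, $m$ terms per factor and all exponents equal to $\alpha$ (legitimate since $m\alpha\ge1$), we obtain $(D^TD)^{m/2}\le B_\alpha$ with $B_\alpha$ the Hadamard product displayed in the statement. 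Monotonicity of $\rho$ for non-negative matrices then gives $\|D\|^m=\rho\big((D^TD)^{m/2}\big)\le\rho(B_\alpha)$, which is the first inequality of (\ref{matrix_sod}); the second follows from the spectral-radius inequality (\ref{spectral2_ref}) of Theorem \ref{DP} applied to $B_\alpha$, once one observes that, by cyclic invariance of $\rho$, the $j$-th column product $A_j^TA_{j+1}A_{j+2}^T\cdots$ (indices mod $m$) has spectral radius $\rho(A_1^TA_2A_3^T\cdots A_{m-1}^TA_m)$ when $j$ is odd and, after a cyclic rotation and a transposition, spectral radius $\rho(A_mA_{m-1}^T\cdots A_4A_3^TA_2A_1^T)$ when $j$ is even; there being $m/2$ columns of each type and each exponent being $\alpha$, multiplying out gives precisely the right-hand side of (\ref{matrix_sod}).

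Suppose now that $m$ is odd. Then $m/2\notin\NN$, so one uses instead the integer power $(D^TD)^m$, an ordinary product of $2m$ Hadamard-product factors; re-ordering the terms inside each factor as in (\ref{defC}) and applying (\ref{basic2}) of Theorem \ref{DP} (now with $k=2m$, $m$ terms per factor, exponents $\alpha$, still $m\alpha\ge1$) yields $(D^TD)^m\le C_\alpha$. Hence $\|D\|^{2m}=\rho\big((D^TD)^m\big)\le\rho(C_\alpha)$, which is the first inequality of (\ref{matrix_lih}), and the second follows from (\ref{spectral2_ref}) applied to $C_\alpha$ together with the fact that all $m$ of its $2m$-fold column products have the same spectral radius: they split into the two cyclic types ``$PQ$'' and ``$QP$'', where $P$ and $Q$ are the two natural alternating products of $A_1,\ldots,A_m$, so $\rho(PQ)=\rho(QP)$ equals the spectral radius of $A_1A_2^TA_3A_4^T\cdots A_{m-2}A_{m-1}^TA_mA_1^TA_2A_3^TA_4\cdots A_{m-2}^TA_{m-1}A_m^T$; raising to the power $\alpha$ for each of the $m$ columns and then taking the $2m$-th root of $\|D\|^{2m}$ gives the stated bound.

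The only genuine work, and the step where a slip is most likely, is the combinatorial bookkeeping: choosing the cyclic re-orderings of the terms in the successive Hadamard factors so that the column products produced by (\ref{basic2}) come out to be exactly the alternating products appearing in $B_\alpha$ (resp. $C_\alpha$), and then checking that these column products, after transposition and cyclic rotation, collapse to the two (even $m$) or the single (odd $m$) spectral radius named in (\ref{matrix_sod}) (resp. (\ref{matrix_lih})). This is identical to the corresponding verification in the proof of Theorem \ref{kernel_alot}, so, as the statement indicates, the details may safely be omitted.
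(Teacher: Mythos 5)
Your proposal is correct and is exactly the argument the paper intends: it repeats the proof of Theorem \ref{kernel_alot} with $A_i^*$ replaced by $A_i^T$ and the exponent $\frac{1}{m}$ replaced by $\alpha$, invoking Theorem \ref{DP} (valid under $\sum_j\alpha_j=m\alpha\ge 1$) in place of Theorem \ref{refinement}, which is precisely what the paper's one-line remark before the theorem prescribes. The exponent bookkeeping in both the even case ($m/2$ columns of each of the two cyclic types, giving $(\rho(P)\rho(Q))^{m\alpha/2}$ before the $m$-th root) and the odd case (all $m$ columns sharing one spectral radius, giving $\rho(R)^{m\alpha}$ before the $2m$-th root) checks out.
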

The following result, which is a special case of (\ref{big_lih}) and (\ref{matrix_lih}), generalizes \cite[Corollary 3.11]{DP16}.
\begin{corollary}
 Let $A_1, A_2, A_3$ be positive kernel operators on $L^2(X, \mu)$. Then
$$\|A_1^{\left(\frac{1}{3}\right)} \circ A_2^{\left(\frac{1}{3}\right)} \circ  A_3^{\left(\frac{1}{3}\right)} \|  \le $$
$$ \rho  \left( (A_1 ^* A_2 A_3^*A_1  A_2^* A_3 )^{\left(\frac{1}{3}\right)} \circ (A_2 ^* A_3 A_1^*  A_2 A_3^*  A_1)^{\left(\frac{1}{3}\right)} \circ (A_3 ^*A_1  A_2^* A_3A_1 ^* A_2) ^{\left(\frac{1}{3}\right)} \right)^{\frac{1}{6}}$$
\be
  \le  \rho (A_1 A_2 ^*A_3 A_1 ^* A_2 A_3 ^* )^{\frac{1}{6}}.
\label{tri_lih}
\ee
If, in addition, $L=l^2 (R)$ and  $\alpha \ge \frac{1}{3}$, 
then
$$ \|A_1^{\left(\alpha \right)} \circ A_2^{\left( \alpha\right)} \circ A_3^{\left( \alpha \right)} \| \le $$
$$\rho  ( (A_1 ^T A_2 A_3^TA_1  A_2^T A_3 )^{(\alpha )} \circ (A_2 ^T A_3 A_1^T  A_2 A_3^T A_1)^{(\alpha )}\circ (A_3 ^TA_1  A_2^T A_3 A_1 ^T A_2)^{(\alpha )})^{\frac{1}{6}}$$
\be
  \le  \rho (A_1 A_2 ^TA_3 A_1 ^T A_2 A_3 ^T )^{\frac{\alpha}{2}}.
\label{matrix_tri}
\ee
\label{alot_tri}
\end{corollary}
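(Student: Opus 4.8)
The plan is to obtain both halves of the corollary as the instances $m=3$ of the estimates (\ref{big_lih}) and (\ref{matrix_lih}) established above; no genuinely new argument is required beyond substituting $m=3$ into the generic cyclic products.

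For the first assertion I would apply (\ref{big_lih}) with $m=3$. Since $3$ is odd, (\ref{big_lih}) gives
\[
\|A_1^{(1/3)}\circ A_2^{(1/3)}\circ A_3^{(1/3)}\|\le \rho^{1/6}(C)\le \rho\bigl(A_1A_2^*A_3A_1^*A_2A_3^*\bigr)^{1/6},
\]
where $C$ is the operator defined in (\ref{defC}). It then remains to substitute $m=3$ into (\ref{defC}) and observe that the three Hadamard factors of $C$ are exactly the cyclic rotations, under the index shift $1\mapsto 2\mapsto 3\mapsto 1$, of the length-$6$ product $A_1^*A_2A_3^*A_1A_2^*A_3$, so that
\[
C=(A_1^*A_2A_3^*A_1A_2^*A_3)^{(1/3)}\circ(A_2^*A_3A_1^*A_2A_3^*A_1)^{(1/3)}\circ(A_3^*A_1A_2^*A_3A_1^*A_2)^{(1/3)},
\]
which is the middle term of (\ref{tri_lih}); likewise the product $A_1A_2^*A_3A_4^*\cdots A_{m-2}^*A_{m-1}A_m^*$ appearing as the rightmost bound in (\ref{th5lih_notref}) and (\ref{big_lih}) collapses to $A_1A_2^*A_3A_1^*A_2A_3^*$ when $m=3$, giving the right-hand side of (\ref{tri_lih}).

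For the second assertion I would apply (\ref{matrix_lih}) with $m=3$; the standing hypothesis $\alpha\ge 1/m$ there becomes precisely $\alpha\ge 1/3$. The exponent $1/(2m)$ on $\rho(C_\alpha)$ becomes $1/6$, the exponent $\alpha/2$ on the last spectral radius is unchanged, and substituting $m=3$ into the definition of $C_\alpha$ yields
\[
C_\alpha=(A_1^TA_2A_3^TA_1A_2^TA_3)^{(\alpha)}\circ(A_2^TA_3A_1^TA_2A_3^TA_1)^{(\alpha)}\circ(A_3^TA_1A_2^TA_3A_1^TA_2)^{(\alpha)},
\]
which is exactly (\ref{matrix_tri}). (That $A_1,A_2,A_3$ may be viewed as non-negative matrices defining operators on $l^2(R)$ when $L=l^2(R)$ is part of the hypothesis of (\ref{matrix_lih}), so nothing extra is needed.)

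There is no analytic obstacle here: the content is already contained in (\ref{big_lih}) and (\ref{matrix_lih}). The only point that demands care is the combinatorial bookkeeping in (\ref{defC}) and (\ref{th5lih_notref}) — keeping track of which factors carry an adjoint (respectively transpose) and in what cyclic order when the generic index expressions are specialised to $m=3$ — and verifying that the three listed Hadamard factors are indeed the images of $A_1^*A_2A_3^*A_1A_2^*A_3$ under the rotation $1\mapsto 2\mapsto 3\mapsto 1$. This step is entirely routine.
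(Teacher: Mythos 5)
Your proposal is correct and coincides with the paper's own treatment: the corollary is stated there precisely as the special case $m=3$ of (\ref{big_lih}) and (\ref{matrix_lih}), with the only work being the substitution $m=3$ into the cyclic products defining $C$ and $C_{\alpha}$ in (\ref{defC}) and Theorem \ref{matrix_alot}, exactly as you describe. Your identification of the three Hadamard factors as the cyclic rotations of $A_1^*A_2A_3^*A_1A_2^*A_3$ and of the exponents $1/6$ and $\alpha/2$ checks out.
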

The following lower bounds for the operator norm of the Jordan triple product $ABA$ generalize \cite[Corollary 3.12]{DP16}.
\begin{corollary}
 Let $A$ and $B$ be positive kernel operators on $L^2(X, \mu)$. Then
$$\|A^{\left(\frac{1}{3}\right)} \circ (B^*)^{\left(\frac{1}{3}\right)} \circ  A^{\left(\frac{1}{3}\right)} \|  \le $$
\be
 \rho  \left( (A ^* B^*A^*A  B A)^{\left(\frac{1}{3}\right)} \circ (B A A^*  B^* A^*  A)^{\left(\frac{1}{3}\right)} \circ (A^*A  B AA ^* B^*) ^{\left(\frac{1}{3}\right)} \right)^{\frac{1}{6}} \le \|ABA\|^{\frac{1}{3}}.
\label{tri_lih_jordan}
\ee
If, in addition, $L=l^2 (R)$ and  $\alpha \ge \frac{1}{3}$, 
then
$$ \|A^{\left(\alpha \right)} \circ (B^T)^{\left( \alpha\right)} \circ A^{\left( \alpha \right)} \| \le $$
\be
\rho  ( (A ^T B^T A^TA  B A )^{(\alpha )} \circ (B A A^T  B^T A^T A)^{(\alpha )}\circ (A^TA  B A A ^T B^T)^{(\alpha )})^{\frac{1}{6}} \le \|ABA\|^{\alpha}.
\label{matrix_jordan}
\ee
\label{jordan}
\end{corollary}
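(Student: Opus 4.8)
The plan is to deduce Corollary~\ref{jordan} directly from Corollary~\ref{alot_tri} via the substitution $A_1:=A$, $A_2:=B^*$, $A_3:=A$ (and, in the $l^2(R)$ case, $A_2:=B^T$). First I would note that this triple is an admissible input: since $B$ is a positive kernel operator on $L^2(X,\mu)$, its adjoint $B^*$ is again a positive kernel operator on $L^2(X,\mu)$ (its kernel is $b(y,x)\ge 0$ a.e.), and in the matrix case the transpose $B^T$ of a non-negative matrix is non-negative. The left-hand sides then match exactly: $\|A_1^{(1/3)}\circ A_2^{(1/3)}\circ A_3^{(1/3)}\| = \|A^{(1/3)}\circ (B^*)^{(1/3)}\circ A^{(1/3)}\|$, and similarly for the $\alpha$-powers in the matrix case.

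Next I would carry out the (purely notational) bookkeeping of the three cyclic words in the middle term of (\ref{tri_lih}). With $A_1^*=A_3^*=A^*$ and $A_2^*=B$, the words $A_1^*A_2A_3^*A_1A_2^*A_3$, $A_2^*A_3A_1^*A_2A_3^*A_1$, $A_3^*A_1A_2^*A_3A_1^*A_2$ collapse to $A^*B^*A^*ABA$, $BAA^*B^*A^*A$, $A^*ABAA^*B^*$ respectively, which are precisely the operators displayed in the middle term of (\ref{tri_lih_jordan}); the same computation with $A_2=B^T$ produces the middle term of (\ref{matrix_jordan}). Thus the first two inequalities in each part of the corollary are immediate instances of Corollary~\ref{alot_tri}.

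Finally, for the rightmost estimate I would simplify the right end of (\ref{tri_lih}): under the substitution, $A_1A_2^*A_3A_1^*A_2A_3^* = ABAA^*B^*A^* = (ABA)(ABA)^*$, because $(ABA)^*=A^*B^*A^*$. Hence $\rho(A_1A_2^*A_3A_1^*A_2A_3^*)^{1/6} = \rho((ABA)(ABA)^*)^{1/6} = (\|ABA\|^2)^{1/6} = \|ABA\|^{1/3}$ by (\ref{eqT}); in the matrix case, using $A^T$ in place of $A^*$, one gets $\rho((ABA)(ABA)^T)^{\alpha/2} = (\|ABA\|^2)^{\alpha/2} = \|ABA\|^{\alpha}$ in the same way. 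I do not expect any genuine obstacle; the only point requiring care is tracking the six-letter cyclic words so that $B^*$ (resp.\ $B^T$) lands in the slot that makes the products collapse to $ABA$ and its adjoint (resp.\ transpose).
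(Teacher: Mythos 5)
Your proposal is correct and follows exactly the paper's own argument: specialize Corollary \ref{alot_tri} with $A_1=A_3=A$, $A_2=B^*$ (resp.\ $A_2=B^T$), and use $\rho(ABAA^*B^*A^*)=\rho(ABA(ABA)^*)=\|ABA\|^2$ to identify the rightmost bound. The cyclic-word bookkeeping and the final simplification via (\ref{eqT}) are all verified correctly.
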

\begin{proof} The inequalities (\ref{tri_lih_jordan}) follow from (\ref{tri_lih}) by setting $A_1=A_3=A$ and $A_2=B^*$ and observing that 
$$ \rho (A BA A ^* B^* A^* )^{\frac{1}{6}}= \rho (A BA (A  B A)^* )^{\frac{1}{6}} =\|ABA\|^{\frac{1}{3}}.$$

Similarly the inequalities (\ref{matrix_jordan}) follow from (\ref{matrix_tri}).
\end{proof}
\begin{remark}{\rm In contrast to the inequalities  (\ref{tri_lih_jordan}) and (\ref{matrix_jordan}) the inequalities $\|A^{\left(\frac{1}{3}\right)} \circ B^{\left(\frac{1}{3}\right)} \circ  A^{\left(\frac{1}{3}\right)} \| \le  \|ABA\|^{\frac{1}{3}} $ and
$ \|A^{\left(\alpha \right)} \circ B^{\left( \alpha\right)} \circ A^{\left( \alpha \right)} \|  \le  \|ABA\|^{\alpha}$ for $\alpha \ge \frac{1}{3}$ are not valid in general. This is shown by the following example from   \cite{Hu11} and \cite{DP16}.
 If  $A =\left[
\begin{matrix}
0 & 1 \\
0 & 1 \\
\end{matrix}\right]$ and $B =\left[
\begin{matrix}
1 & 1 \\
0 & 0 \\
\end{matrix}\right]$, then
$$ \|A^{\left(\alpha \right)} \circ B^{\left( \alpha\right)} \circ A^{\left( \alpha \right)} \| = \|A\circ B \circ A\|=1 >0=  \|ABA\|^{\alpha}.$$

The inequalities (\ref{matrix_jordan}) are sharp, as the case $A=B=I$ shows.
}
\end{remark}

 \bigskip

\noindent {\bf Acknowledgements.} 
The author thanks Professor Franz Lehner for reading the first version of this article and to his collegues and staff at the Institute of Discrete Mathematics of TU Graz for their hospitality during his research stay in Austria.

This work was supported in part by the JESH grant of the Austrian Academy of Sciences and by grant P1-0222 of the Slovenian Research Agency. 
\bibliographystyle{amsplain}

\end{document}